\chardef\coloryes=0 
\chardef\isitdraft=0 
   \def\eqref#1{({\ref{#1}})}                
\definecolor{labelkey}{gray}{.3}
\definecolor{refkey}{rgb}{.3,0.3,0.3}
  \def\startnewsection#1#2{\section{#1}\label{#2}\setcounter{equation}{0}}   
  \def\nnewpage{} 
\begin{document}
\def\ques{{\colr \underline{??????}\colb}}
\def\nto#1{{\colC \footnote{\em \colC #1}}}
\def\fractext#1#2{{#1}/{#2}}
\def\fracsm#1#2{{\textstyle{\frac{#1}{#2}}}}   
\def\nnonumber{}


\def\colr{{}}
\def\colg{{}}
\def\colb{{}}
\def\cole{{}}
\def\colA{{}}
\def\colB{{}}
\def\colC{{}}
\def\colD{{}}
\def\colE{{}}
\def\colF{{}}

\ifnum\coloryes=1

  \definecolor{coloraaaa}{rgb}{0.1,0.2,0.8}
  \definecolor{colorbbbb}{rgb}{0.1,0.7,0.1}
  \definecolor{colorcccc}{rgb}{0.8,0.3,0.9}
  \definecolor{colordddd}{rgb}{0.0,.5,0.0}
  \definecolor{coloreeee}{rgb}{0.8,0.3,0.9}
  \definecolor{colorffff}{rgb}{0.8,0.3,0.9}
  \definecolor{colorgggg}{rgb}{0.5,0.0,0.4}

 \def\colg{\color{colordddd}}
 \def\colb{\color{black}}
 \def\colr{\color{red}}
 \def\cole{\color{colorgggg}}

 \def\colA{\color{coloraaaa}}
 \def\colB{\color{colorbbbb}}
 \def\colC{\color{colorcccc}}
 \def\colD{\color{colordddd}}
 \def\colE{\color{coloreeee}}
 \def\colF{\color{colorffff}}
 \def\colG{\color{colorgggg}}

\fi
\ifnum\isitdraft=1
   \chardef\coloryes=1 
   \baselineskip=17pt
   \input macros.tex
   \def\blackdot{{\color{red}{\hskip-.0truecm\rule[-1mm]{4mm}{4mm}\hskip.2truecm}}\hskip-.3truecm}
   \def\bdot{{\colC {\hskip-.0truecm\rule[-1mm]{4mm}{4mm}\hskip.2truecm}}\hskip-.3truecm}
   \def\purpledot{{\colA{\rule[0mm]{4mm}{4mm}}\colb}}
   \def\pdot{\purpledot}
\else
   \baselineskip=15pt
   \def\blackdot{{\rule[-3mm]{8mm}{8mm}}}
   \def\purpledot{{\rule[-3mm]{8mm}{8mm}}}
   \def\pdot{}
\fi

\def\tdot{\fbox{\fbox{\bf\tiny I'm here; \today \ \currenttime}}}
\def\nts#1{{\hbox{\bf ~#1~}}} 
\def\nts#1{{\colr\hbox{\bf ~#1~}}} 
\def\ntsf#1{\footnote{\hbox{\bf ~#1~}}} 
\def\ntsf#1{\footnote{\colr\hbox{\bf ~#1~}}} 
\def\bigline#1{~\\\hskip2truecm~~~~{#1}{#1}{#1}{#1}{#1}{#1}{#1}{#1}{#1}{#1}{#1}{#1}{#1}{#1}{#1}{#1}{#1}{#1}{#1}{#1}{#1}\\}
\def\biglineb{\bigline{$\downarrow\,$ $\downarrow\,$}}
\def\biglinem{\bigline{---}}
\def\biglinee{\bigline{$\uparrow\,$ $\uparrow\,$}}

\def\tilde{\widetilde}

\newtheorem{Theorem}{Theorem}[section]
\newtheorem{Corollary}[Theorem]{Corollary}
\newtheorem{Proposition}[Theorem]{Proposition}
\newtheorem{Lemma}[Theorem]{Lemma}
\newtheorem{Remark}[Theorem]{Remark}
\newtheorem{assumption}[Theorem]{Assumptions}

\newtheorem{definition}[Theorem]{Definition}
\def\theequation{\thesection.\arabic{equation}}
\def\endproof{\hfill$\Box$\\}
\def\square{\hfill$\Box$\\}
\def\comma{ {\rm ,\qquad{}} }            
\def\commaone{ {\rm ,\qquad{}} }         
\def\dist{\mathop{\rm dist}\nolimits}    
\def\sgn{\mathop{\rm sgn\,}\nolimits}    
\def\Tr{\mathop{\rm Tr}\nolimits}    
\def\div{\mathop{\rm div}\nolimits}    
\def\supp{\mathop{\rm supp}\nolimits}    
\def\divtwo{\mathop{{\rm div}_2\,}\nolimits}    
\def\re{\mathop{\rm {\mathbb R}e}\nolimits}    

\def\dbC{\mathbb C}
\def\dbR{\mathbb R}
\def\dbL{\mathbb L}
\def\dbN{\mathbb N}
\def\dbP{\mathbb P}
\def\dbE{\mathbb E}
\def\dbF{\mathbb F}
\def\dbG{\mathbb G}
\def\dbQ{\mathbb Q}
\def\CC{{\tilde C}}
\def\cZ{{\cal Z}}
\def\cD{{\cal D}}
\def\e{\epsilon}
\def\d{\delta}
\def\1{{\mathbf{1}}}

\def\indeq{\qquad{}}                     
\def\period{.}                           
\def\semicolon{\,;}                      

\title{Existence of invariant measures for the stochastic damped Schr\"odinger equation}
\author{Ibrahim Ekren, Igor Kukavica, and Mohammed Ziane}
\maketitle

\date{}

\begin{center}
\end{center}

\medskip

\indent Departement fur Mathematik, ETH Zurich, Ramistrasse 101, CH-8092, Zurich\\
\indent email: ibrahim.ekren@math.ethz.ch\\
\indent Department of Mathematics, University of Southern California, Los Angeles, CA 90089\\
\indent e-mails: kukavica\char'100usc.edu, ziane\char'100usc.edu

\begin{abstract}
In this paper, we address the long time behavior of solutions of the
stochastic Schr\"odinger equation in ${\mathbb R}^{d}$.
We prove the existence of an invariant measure and establish
asymptotic compactness of solutions, implying in particular
the existence of an ergodic measure.
\end{abstract}

\noindent\thanks{\em Mathematics Subject Classification\/}:

\noindent\thanks{\em Keywords:\/}
Invariant measures, 
stochastic Schr\"odinger equation, 
white noise, 
long time behavior, 
asymptotic compactness, 
tightness, 
Feller property, 
Aldous criterion

\startnewsection{Introduction}{sec1}
The main purpose of the paper is to study the long time behavior of
the stochastic damped Schr\"odinger equation 
  \begin{equation}
    du + (\lambda u+i\Delta u -i|u|^{2\sigma} u )dt
    =  \Phi dW_t
   \label{EQ01}
  \end{equation}
in an unbounded domain.
Our main result 
provides the existence of an invariant measure
of the Markov semigroup for the equation \eqref{EQ01}
driven by an
additive noise.
In addition, using the asymptotic compactness, we prove that the set
of invariant measures is closed and convex leading to an existence of
an ergodic measure.

The problem of
existence of an invariant measure for stochastic 
partial differential equations 
with dissipation
and in a bounded domain
is now relatively well-understood with
the construction of the invariant measure following the classical
Krylov-Bogolyubov procedure. 
The smoothing
properties of the equation and the 
boundedness of the domain guarantee the necessary compactness.
For example, the existence of invariant measures for the reaction
diffusion equations, for the Navier-Stokes equations, 
complex Ginzburg-Landau, and
fractionally dissipated Euler equations was established in
\cite{CGV,F1,F2}.
Also, for the primitive equations, the invariant measure
was constructed in \cite{GKVZ}.

In the case of nondegenerate noise,
a coupling method can be used to establish
existence and uniqueness of the ergodic measure. For instance,
in the case of Schr\"odinger equation with
nondegenerate noise and when the domain is bounded, 
Debussche and Odasso
established in \cite{DO}
the existence of a unique ergodic measure
(cf.~also [DV,DZ,GMR,HM,KS,MR]).

The main goal of this paper is to address the existence of an
invariant measure  for the stochastic damped Schr\"odinger
equation in an unbounded domain.
The  main difficulties are the
the lack of smoothing and compactness properties of the solution operator in finite time.
For instance, the coupling method is not
expected to work in this situation
since Foias-Prodi type estimates, necessary for the
approach, are not available.  

In order to overcome these difficulties, we establish an asymptotic
compactness property of the solution operator
(cf.~Lemma~\ref{lemma-tightness}).  
Namely, we prove that for every sequence of
solutions resulting from $H^1$-bounded initial conditions and for
every sequence of times diverging to $\infty$, there exists a
subsequence of solutions and a sequence of times such that marginals of
these solutions at these times converge in distribution in $H^1$.  For
this purpose we employ the conserved quantities used classically for the
deterministic analog of the equations. We also use the energy
equation approach introduced in the deterministic setting case by
J.~Ball \cite{B}. His method was further developed
to more general deterministic situations, in particular to establish
the existence and regularity
of attractors for the damped KdV equation \cite{GR,R}
and for the damped
Schr\"odinger equation \cite{G1,G2,G3,GK,GL}.
Two byproducts of the asymptotic compactness property 
established in this paper
is the existence of an invariant measure for the stochastic Schr\"odinger equation and the compactness of the set of invariant measures.
We note that the existence and uniqueness of solutions was established
by de~Bouard and Debussche in \cite{DD}.

The paper is organized as follows.
In Section~\ref{sec4}, we prove an abstract tightness result that links the evolution of some scalar quantities to the asymptotic compactness stated above.
The main feature of the $k$-th order scalar quantity
is that it is equivalent to the $H^k$ norm, while the drift of 
square of its expectation is continuous in $H^{k-1}$ norm.
We also make an Aldous type continuity assumption (cf.~(iii) in Definition~\ref{energy-evolution}) which allows us
to use Aldous criterion \cite{Bi} for convergence of distributions in $L_{\rm loc}^{2}$
to pass to a limiting martingale solution \cite{D,MiR1,MiR2}.
We note that while the linear part is assumed to be a Schr\"odinger type
operator $i\Delta$, our criterion can be used for more general linear
operators as well
after suitable adjustments.
In Section~\ref{sec5}, we use this asymptotic compactness criterion
for the Schr\"odinger equation by considering the first two classical
Schr\"odinger invariants and 
prove the main  tightness lemma. 
The paper is concluded by showing that the set of invariant measures
is closed and convex, which implies the existence of an ergodic measure.

\startnewsection{Notations}{sec2}
For functions 
$u,v\in L^2(\dbR^{d})=L^2(\dbR^d;\mathbb{C})$, 
denote by 
$\Vert  u\Vert_{L^2}$ the $L^2(\dbR^d)$ 
norm of $u$ and by $(u,v)=\int_{\dbR^d} u(x)\overline v(x)dx$,
the
$L^2$-inner product of $u$ and $v$. We fix  a basis 
$\{e_i\}_{i\geq 0}$
of $L^2(\dbR^d)$ that consists of smooth and compactly supported functions.

For a Banach space $B$ and with $T>0$ and $p\ge1$, 
denote by $L^p([0,T];B)$ the space of functions from $[0,T]$ into $B$
with integrable $p$-th power over $[0,T]$ and by $C([0,T];B)$ the set
of continuous functions from $[0,T]$ into $B$. 
Similarly to functional spaces, for $p>0$, denote by $\dbL^p
(\Omega,B)$ the space of random variables with values in $B$ and a
finite $p$-th moment.

Denote by $\Delta =\sum_i\partial^2_{i}$ the Laplace operator and by $H^r (\dbR^d)$ the Sobolev space of  functions $u$ satisfying
  \begin{equation}\Vert u\Vert^2_{H^r}
        =\int_\dbR (1-\Delta)^{\fractext{r}{2}} 
              (u(x)\overline u(x)) dx<\infty,
   \end{equation}
with the inner product denoted by $(u,v)_{H^r}$. 
Write ${\cal B} 
  (H^1(\dbR^d) )$ 
for the set of Borel measurable subsets of $H^1(\dbR^d)$.
Also, denote by $L^2_{\rm loc}(\dbR^d)$ the space of locally square
integrable functions which with the usual metric is a
complete
metric space.

For a Hilbert space $H$, we write ${{\rm HS}(L^2,H)}$ for the space 
of linear operators $\Phi\colon L^2(\dbR^d) \to H$ 
with finite Hilbert-Schmidt norm 
  \begin{equation}
     \Vert\Phi\Vert_{{\rm HS}(L^2,H)}
        =
         \left(
          \sum_{i=1}^\infty \Vert\Phi e_i\Vert_H^2
         \right)^{1/2}
   \period
  \end{equation}

\startnewsection{The Schr\"odinger equation}{sec3} 
We fix a probability space $(\Omega, \dbF,\dbP)$ carrying a countable
family of independent Brownian motions 
$\{B^i_t\}_{i\in\dbN,t\geq 0}$ and define the Wiener process 
  \begin{equation}
    W_t=\sum_{i\in\dbN}e_i B_t^i\period
  \end{equation} 
Fix $\lambda>0$. In this paper,
we investigate the long time behavior
of solutions of the stochastic damped nonlinear Schr\"odinger equation 
  \begin{equation}\label{equation-schro}
    du + (\lambda u+i\Delta u -i|u|^{2\sigma} u )dt
    =  \Phi dW_t,
  \end{equation}
on the space-time 
domain $[0,\infty)\times \dbR^d$ 
with an additive noise, by establishing
the existence of an invariant measure and the asymptotic 
tightness of 
solutions of 
the equation.
We emphasize that unlike in \cite[Assumption~H1]{MR}, our problem
in the whole space $\dbR^d$ does not allow any compact embeddings. 

Recall the functionals 
  \begin{align}
   M(v)&=|v|^2_{L^2}\\
   H(v)&=\frac{1}{2} \int_{\dbR^d} |\nabla v(x)|^2 dx -\frac{1}{2\sigma +2}\int_{\dbR^d}|v(x)|^{2\sigma +2}dx
  \end{align}
which are 
classical invariant quantities for the Schr\"odinger equation.
The existence of solutions for the equation \eqref{equation-schro} was
proven in \cite{DD}.
In order to be able to apply the existence results in
\cite{DD}, we make the following assumptions.

\begin{assumption}\label{assumption-existence-schro}
i) $0 \leq \sigma <\fractext{2}{(d-2)}$ if $d\geq 3$ or $\sigma\ge0$ 
if $d=1,2$.\\
ii) $\Phi\in HS(L^2(\dbR^d); H^1(\dbR^d))$.
\end{assumption}

We now 
recall the existence result from \cite[Theorem~3.4, Propositions~3.2 and~3.4]{DD}. 

\cole
\begin{Theorem}
Under Assumptions~\ref{assumption-existence-schro}, for every
$\dbF_0$ measurable, $H^1(\dbR^n)$ valued random variable $u_0$, 
there exists an $H^1(\dbR^d)$-valued 
and continuous solution $\{u_t\}_{t\geq 0}$ 
of \eqref{equation-schro} with the initial condition $u_0$. 
Additionally, the  quantities $M$ and $H$ evolve as
  \begin{align}\label{evolution-M}
   dM(u_s)
    +2\lambda  M(u_s)ds =2 \sum_{i} Re \left( u_s,{\Phi e_i}\right)dB^i(s)+ \Vert\Phi \Vert_{HS(L^2; L^2)}^2ds 
\end{align}
and
  \begin{align}\label{evolution-H}
  &dH(u_s)+2\lambda H(u_s)ds \notag\\
   \indeq  &=
  	\frac{\lambda\sigma}{\sigma+1}  \int |u(\tilde s,x)|^{2\sigma+2}dx d\tilde s
         - \sum_i \re \left( \Delta u(s) +|u(s)|^{2\sigma} u(s),\Phi e_i
          \right)dB^i_s\notag
     \nonumber\\&\indeq
       +\left(\frac{\Vert\nabla \Phi \Vert^2_{{HS(L^2; L^2)}}}{2} 
       -\frac{\Vert|u(s)|^\sigma\Phi\Vert^2_{{HS(L^2;
       L^2)}}}{2}\right)ds
     \nonumber\\&\indeq
      -{\sigma}\sum_i  \left( |u(s)|^{2\sigma-2},(\re(\overline u(s) \Phi e_i))^2\right)ds,
  \end{align}
where $|u(s)|^\sigma\Phi$ is the operator that to a function $v$ associates the function $|u(s)|^\sigma \Phi v$.
\end{Theorem}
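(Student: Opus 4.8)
The plan is to take the existence and continuity of the $H^1$-valued solution directly from \cite{DD}, and to establish the evolution identities \eqref{evolution-M} and \eqref{evolution-H} by applying the infinite-dimensional It\^o formula to the functionals $M$ and $H$ along $u$. Writing \eqref{equation-schro} as $du=-(\lambda u+i\Delta u-i|u|^{2\sigma}u)\,dt+\Phi\,dW_t$, each of $dM$ and $dH$ decomposes into three pieces: the drift produced by the deterministic vector field, the martingale produced by $\Phi\,dW_t$, and the It\^o trace correction built from $\Phi$. In every case the drift further splits into a conservative (Schr\"odinger) part, which should vanish because $M$ and $H$ are the classical invariants of the unperturbed flow, and a damping part coming from $-\lambda u$.

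First I would treat $M(v)=\Vert v\Vert_{L^2}^2$, with $DM(v)[h]=2\re(v,h)$ and the constant second form $D^2M(v)[h,h]=2\Vert h\Vert_{L^2}^2$. The conservative contribution is $-2\re(u,i\Delta u-i|u|^{2\sigma}u)$, which vanishes since $(u,i\Delta u)$ and $(u,i|u|^{2\sigma}u)$ are purely imaginary after integration by parts; the damping gives $-2\lambda M(u)$; the martingale is $2\sum_i\re(u,\Phi e_i)\,dB^i$; and the trace term equals $\sum_i\Vert\Phi e_i\Vert_{L^2}^2=\Vert\Phi\Vert_{\mathrm{HS}(L^2;L^2)}^2$. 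Collecting these yields \eqref{evolution-M}.

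Next I would compute, for $v\in H^1$,
\[
  DH(v)[h]=-\re(\Delta v+|v|^{2\sigma}v,\,h),
\]
\[
  D^2H(v)[h,h]=-\re(\Delta h,h)-(\sigma+1)\re(|v|^{2\sigma}h,h)-\sigma\,\re(|v|^{2\sigma-2}v^2\,\overline h,\,h).
\]
The conservative part of the drift, $DH(u)[-(i\Delta u-i|u|^{2\sigma}u)]$, vanishes by conservation of the energy along the Schr\"odinger flow, while the damping part $DH(u)[-\lambda u]=\lambda\re(\Delta u+|u|^{2\sigma}u,u)$ equals $-2\lambda H(u)+\frac{\lambda\sigma}{\sigma+1}\int|u|^{2\sigma+2}$, producing the first term on the right of \eqref{evolution-H}. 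The martingale is $\sum_i DH(u)[\Phi e_i]\,dB^i_s=-\sum_i\re(\Delta u+|u|^{2\sigma}u,\Phi e_i)\,dB^i_s$. Finally the correction $\frac12\sum_i D^2H(u)[\Phi e_i,\Phi e_i]$ splits: the term $\re(\Delta\Phi e_i,\Phi e_i)=-\Vert\nabla\Phi e_i\Vert_{L^2}^2$ gives $\frac12\Vert\nabla\Phi\Vert_{\mathrm{HS}(L^2;L^2)}^2$, and using $|u|^2|\Phi e_i|^2=(\re(\overline u\,\Phi e_i))^2+(\mathrm{Im}(\overline u\,\Phi e_i))^2$ together with $\re(u^2(\overline{\Phi e_i})^2)=(\re(\overline u\,\Phi e_i))^2-(\mathrm{Im}(\overline u\,\Phi e_i))^2$ recombines the two potential contributions into exactly $-\frac12\Vert|u|^\sigma\Phi\Vert_{\mathrm{HS}(L^2;L^2)}^2-\sigma\sum_i(|u|^{2\sigma-2},(\re(\overline u\,\Phi e_i))^2)$. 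This is \eqref{evolution-H}.

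The main obstacle is that these formal manipulations must be justified for a solution that a priori only lives in $H^1$: the factor $\Delta u$ appearing in $DH$ lies only in $H^{-1}$, and $H$ is not genuinely twice Fr\'echet differentiable on $H^1$, so It\^o's formula cannot be invoked directly. I would follow \cite{DD} and first derive the two identities for a regularized or spectrally truncated equation, whose solution is smooth enough that the classical It\^o formula applies, while recording a priori bounds on $M(u)$ and $H(u)$ that are uniform in the approximation. Passing to the limit then requires the $H^1$-continuity of $t\mapsto u_t$, convergence of the nonlinear terms $|u|^{2\sigma}u$ and of the potential trace terms, and the integrability supplied by $\Phi\in\mathrm{HS}(L^2;H^1)$; keeping all these terms under control within the subcritical range $\sigma<2/(d-2)$, via the Sobolev embeddings of Assumption~\ref{assumption-existence-schro}, is the delicate point.
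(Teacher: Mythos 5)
Your proposal is correct and is essentially the paper's own treatment: the paper does not prove this theorem at all, but recalls it verbatim from \cite[Theorem~3.4, Propositions~3.2 and~3.4]{DD}, adding only the remark that one passes easily from $\lambda=0$ to $\lambda>0$. Your It\^o-formula derivation---existence and $H^1$-continuity quoted from \cite{DD}, the damping terms producing $-2\lambda M$ and $-2\lambda H+\frac{\lambda\sigma}{\sigma+1}\int|u|^{2\sigma+2}$, the trace terms recombined via $(\re(\overline u\,\Phi e_i))^2=\frac12|u|^2|\Phi e_i|^2+\frac12\re(u^2\overline{\Phi e_i}^2)$, and the rigor issue at $H^1$ regularity deferred to the regularization scheme of \cite{DD}---is precisely the argument behind \eqref{evolution-M} and \eqref{evolution-H} that the citation encapsulates.
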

\colb

Note that the results in \cite{DD} are given for $\lambda =0$ but one
can easily pass from $\lambda=0$ to any $\lambda> 0$.

\subsection{The Semigroup}
Let $u_0\in H^1(\dbR^d)$ be a deterministic initial condition, and let
$u$ be the corresponding solution of \eqref{equation-schro}. For all $B\in {\cal B} (H^1(\dbR^d) )$ we define the transition probabilities of the equation by 
  \begin{equation}\label{transition-prob}
    P_t (u_0,B)=\dbP(u_t\in B)
   \period
  \end{equation}
For any $H^1(\dbR^d)$-valued measure $\nu$, we denote by 
$(\nu P_t)(\cdot)=\int_{H^1(\dbR^d)} P_t(v,\cdot)\nu (dv)$ the distribution
at time~$t$ of the solution of \eqref{equation-schro} with the initial condition having the distribution $\nu$.
 
For any function $\xi\in C_b(H^1(\dbR^d);\dbR)$ and $t\geq 0$, denote 
  \begin{equation}\label{definition-psi}
    P_t \xi(u_0) =\dbE\left[\xi(u_t)\right]=\int_{H^1(\dbR^d)} \xi(v)P_t(u_0, dv)
   \period
  \end{equation}

\begin{definition}
{\rm Let $\mu$ be a probability measure on $H^1(\dbR^d)$. We say that $\mu$
is an invariant measure for $P_t$ if we have
  \begin{align}
  \int_{H^1(\dbR^d)} \xi(v)\mu( dv)=\int_{H^1(\dbR^d)} P_t\xi(v') \mu( dv')
  \end{align}
for all $\xi\in C_b(H^1(\dbR^d);\dbR)$ and $t\geq 0$.
}
\end{definition}

\subsection{Main results concerning the Schr\"odinger equation}
The following statement is the main result of this paper.

\cole
\begin{Theorem}
\label{T01}
Under Assumptions~\ref{assumption-existence-schro}, 
there exists an invariant measure for $P_t$. 
\end{Theorem}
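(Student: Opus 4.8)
The plan is to produce the invariant measure by the Krylov--Bogolyubov averaging procedure, with the asymptotic compactness of Lemma~\ref{lemma-tightness} playing the role that compact Sobolev embeddings play on bounded domains. Fix the deterministic initial datum $u_0=0$, let $u$ be the corresponding solution of \eqref{equation-schro}, and define the Ces\`aro averages of the transition probabilities,
\begin{equation}
\mu_T(\cdot)=\frac1T\int_0^T P_t(0,\cdot)\,dt
\period
\end{equation}
By the classical Krylov--Bogolyubov argument, any weak limit point of $\{\mu_T\}_{T>0}$ is invariant for $P_t$, provided $\{\mu_T\}$ is tight on $H^1(\dbR^d)$ and $P_t$ enjoys the Feller property; the entire difficulty is concentrated in the tightness, since the evolution on $\dbR^d$ has no compactifying mechanism.

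For the tightness I would show that the family of one-dimensional marginals $\{{\cal L}(u_t)\}_{t\ge0}$ is relatively compact for the weak topology on probability measures over $H^1(\dbR^d)$. The behavior as $t\to\infty$ is exactly Lemma~\ref{lemma-tightness}: the constant data $u_0=0$ is trivially $H^1$-bounded, so along any sequence $t_n\to\infty$ a subsequence of ${\cal L}(u_{t_n})$ converges in distribution in $H^1$. The behavior for $t$ in a bounded range is handled by the $H^1$-continuity of the trajectories $t\mapsto u_t$ furnished by the existence theorem, which makes $t\mapsto{\cal L}(u_t)$ weakly continuous. Together these show that every sequence ${\cal L}(u_{s_n})$ has a weakly convergent subsequence, so by Prokhorov's theorem (valid since $H^1(\dbR^d)$ is a separable Hilbert space) the family $\{{\cal L}(u_t)\}_{t\ge0}$ is tight: for each $\e>0$ there is a compact $K\subset H^1(\dbR^d)$ with $\sup_{t\ge0}{\cal L}(u_t)(K^c)\le\e$. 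Integrating in $t$ gives $\mu_T(K^c)\le\e$ for all $T$, so $\{\mu_T\}$ is tight as well.

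Finally I would pass to the limit. Select $T_n\to\infty$ with $\mu_{T_n}\rightharpoonup\mu$ weakly on $H^1(\dbR^d)$. The Feller property---continuous dependence of the solution of \eqref{equation-schro} on its initial condition in $H^1$, so that $P_s\xi\in C_b(H^1(\dbR^d))$ whenever $\xi\in C_b(H^1(\dbR^d))$---lets me compute, for each such $\xi$ and each $s\ge0$,
\begin{align}
\int P_s\xi\,d\mu_{T_n}
&=\frac1{T_n}\int_0^{T_n}P_{t+s}\xi(0)\,dt \notag\\
&=\int\xi\,d\mu_{T_n}
  +\frac1{T_n}\left(\int_{T_n}^{T_n+s}P_t\xi(0)\,dt-\int_0^{s}P_t\xi(0)\,dt\right)
\period
\end{align}
Since $\Vert P_t\xi\Vert_\infty\le\Vert\xi\Vert_\infty$, the bracketed boundary terms are $O(s/T_n)$ and tend to $0$; letting $n\to\infty$, the Feller property identifies the left-hand side with $\int P_s\xi\,d\mu$ and weak convergence identifies the first term on the right with $\int\xi\,d\mu$, yielding $\int P_s\xi\,d\mu=\int\xi\,d\mu$ for all $\xi\in C_b(H^1(\dbR^d))$ and $s\ge0$. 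Thus $\mu$ is invariant. The main obstacle in this plan is the tightness of the marginals: on $\dbR^d$ the uniform energy bounds coming from \eqref{evolution-M}--\eqref{evolution-H} give only weak or $L^2_{\rm loc}$ compactness, and it is precisely Lemma~\ref{lemma-tightness}---which combines the Schr\"odinger invariants $M$ and $H$ with Ball's energy-equation method---that upgrades this to compactness of the laws in the strong $H^1$ topology. Once that lemma is granted, the Krylov--Bogolyubov steps above are routine.
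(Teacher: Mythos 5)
Your proposal is correct, and its scaffolding (Krylov--Bogolyubov averages started from $u_0=0$, Feller property from continuous dependence on the data, tightness of the Ces\`aro means) matches the paper's; but the way you extract tightness of the averages from Lemma~\ref{lemma-tightness} is genuinely different. The paper never considers the full one-parameter family of marginals: it applies part (i) of the lemma only along integer times to get a compact $K_\epsilon$ with $\sup_k P_k(0,K_\epsilon^c)\le\epsilon/2$, and then uses part (ii) --- tightness of $\{P_s(v,\cdot):s\in[0,1],\,v\in K_\epsilon\}$ --- together with the Markov decomposition $P_t(0,\cdot)=\int P_k(0,dv)\,P_{t-k}(v,\cdot)$ to produce a second compact $A_\epsilon$ with $\mu_n(A_\epsilon^c)\le\epsilon$; everything is done with explicitly constructed compact sets and only the elementary direction of Prokhorov's theorem. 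You instead establish relative compactness of the entire family $\{P_t(0,\cdot):t\ge0\}$ --- part (i) of the lemma for time sequences diverging to infinity, a.s.\ $H^1$-continuity of trajectories (hence weak continuity of $t\mapsto P_t(0,\cdot)$) for bounded ones --- and then invoke the converse half of Prokhorov's theorem on the Polish space $H^1(\dbR^d)$ to upgrade subsequential compactness into a single compact set $K$ with $\sup_{t\ge0}P_t(0,K^c)\le\epsilon$, which you integrate in $t$. Your route buys economy: part (ii) of Lemma~\ref{lemma-tightness} is never needed, nor is the Markov-property bookkeeping over the intervals $[k,k+1]$. The cost is reliance on the converse Prokhorov theorem (relative compactness $\Rightarrow$ tightness, which needs completeness, separability, and metrizability of the weak topology) and on pathwise $H^1$-continuity of the solution, both of which are available here; the paper's argument is more constructive and also explains why part (ii) of the lemma is stated at all. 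Both proofs use the semigroup identity $P_{t+s}\xi(0)=\int P_s\xi(v)\,P_t(0,dv)$ and the Feller property from \cite{DD} in the same way, and your final passage to the limit identifying the invariance of $\mu$ is the standard one.
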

\colb

The main ingredient in the proof is the following lemma.

\cole
\begin{Lemma}
\label{lemma-tightness}
Under Assumptions~\ref{assumption-existence-schro} the
following two tightness assertions hold.\\
i) For all sequences
of times $t_n\to\infty$ and 
$\dbF_0$-measurable initial conditions
$u^n_0\in H^1(\dbR^d)$ with distributions 
$\nu^n$ satisfying 
  \begin{equation*}
     \dbE\left[\Vert u^n_0\Vert _{H^1}^{4\vee\lceil{4d\sigma}\rceil}+\Vert u^n_0\Vert _{L^2}^{\lceil{4\sigma(2-d)+8}\rceil}\right]\leq R
  \end{equation*}
for some $R>0$,  
the family 
of measures
  \begin{equation}
    \bigl\{
       (\nu^n P_{t_n})(\cdot)
       :
       n\in\dbN
    \bigr\}
   \label{EQ02}
  \end{equation}
on $H^{1}({\mathbb R^d})$ 
is tight.\\
ii) For all compact sets $K\subseteq H^1(\dbR^d)$ the family of probabilities 
  \begin{equation}
    \left\{P_{s}(v,\cdot):s\in[0,1],\,v\in K\right\}
   \label{EQ03}
  \end{equation}
on $H^1(\dbR^d)$    
is tight.
\end{Lemma}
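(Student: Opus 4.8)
The plan is to derive both assertions from the abstract tightness criterion of Section~\ref{sec4}, applied to the two classical Schr\"odinger invariants $M$ and $H$. Since $\dbR^d$ admits no compact Sobolev embedding, one cannot extract $H^1$-tightness from a uniform bound in a higher norm; the compactness must instead be recovered through the energy-equation (Ball) method, whose stochastic version is exactly what the abstract lemma encodes. Concretely, I would first verify that a coercive combination of $M$ and $H$ (such as $H+cM$) satisfies the hypotheses of Definition~\ref{energy-evolution}: that this combination is equivalent to $\Vert\cdot\Vert_{H^1}^2$, that the drift of its expectation is continuous in the weaker $L^2$ (respectively $L^2_{\rm loc}$) topology, and that the Aldous-type time-regularity condition~(iii) holds. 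The equivalence and the control of the nonlinear term rest on the Gagliardo--Nirenberg inequality together with the subcriticality in Assumptions~\ref{assumption-existence-schro}(i), which lets the nonlinear term in $H$ be absorbed into $\Vert\nabla\cdot\Vert_{L^2}^2$ modulo powers of $M$.

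Next I would establish the uniform moment bounds that feed the abstract lemma. Taking expectations in \eqref{evolution-M} kills the martingale and gives a closed linear equation for $\dbE[M(u_s)]$, so that the damping $\lambda>0$ yields a bound on $\dbE[M(u_s)]$ uniform in $s$, depending only on the initial mass and $\Vert\Phi\Vert_{\rm HS}$. Inserting this into the expectation of \eqref{evolution-H} and applying Gronwall to the dissipative term $-2\lambda H$ produces a bound on $\dbE[H(u_s)]$, hence on $\dbE[\Vert u_s\Vert_{H^1}^2]$, again uniform in time. The high exponents $4\vee\lceil 4d\sigma\rceil$ and $\lceil 4\sigma(2-d)+8\rceil$ in part~(i) are precisely those needed to close these estimates---controlling the It\^o corrections and nonlinear contributions in \eqref{evolution-H} and bounding the squares of the relevant expectations. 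The key point is that, because of the damping, all these bounds are uniform over the diverging times $t_n$.

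With the moment bounds in hand, the core argument runs as in Section~\ref{sec4}. The uniform $H^1$ moments give tightness of the marginals in the weak $H^1$ topology and, via Rellich compactness on bounded subdomains, in $L^2_{\rm loc}$; the time-regularity of the scalar processes $s\mapsto M(u_s)$ and $s\mapsto H(u_s)$ supplied by condition~(iii) lets Aldous' criterion upgrade this to tightness of the trajectories on a fixed window and extract a limiting martingale solution. Passing to the limit in the expectation of the coercive energy evolution---a damped linear equation whose forcing is continuous in the weaker topology where the marginals already converge---forces $\dbE[\Vert u^n_{t_n}\Vert_{H^1}^2]$ to converge to the energy of the limit, the initial contribution being harmless since it enters with the factor $e^{-2\lambda t_n}\to0$. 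In a Hilbert space, weak convergence together with convergence of norms gives strong convergence, which is exactly the $H^1$-tightness claimed in~(i). For part~(ii) it suffices to treat an arbitrary sequence $s_k\in[0,1]$, $v_k\in K$; the same scheme applies with $t_n$ replaced by the bounded times $s_k$ and deterministic data $v_k$ drawn from the $H^1$-compact set $K$. Here the moment bounds are immediate from finite-time energy estimates (no damping is needed to beat the horizon), and the continuity of the drift in the lower norm again promotes $L^2_{\rm loc}$ tightness to $H^1$-tightness, with $H^1$-compactness of $K$ supplying a convergent subsequence of the initial data to identify the limit.

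I expect the main obstacle to be the continuity of the nonlinear drift $|u|^{2\sigma}u$ in the weaker topology, i.e.\ verifying that the nonlinear contributions to the drift of $H$ in \eqref{evolution-H} really are continuous in $L^2_{\rm loc}$ and do not see the lost compactness; this is where the subcritical range for $\sigma$ is essential and where Gagliardo--Nirenberg interpolation must be applied carefully. The secondary difficulty, specific to~(i), is the interplay between the diverging times and the damping: one must take the backward window long enough that $e^{-2\lambda t_n}$ annihilates the only boundedly controlled initial energy, while retaining enough uniformity to pass the drift integral to the limit along the convergent subsequence.
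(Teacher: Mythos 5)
Your plan for part (i) is essentially the paper's own proof: verify the $H^1$-norm evolution property of Definition~\ref{energy-evolution} with $F_1,\tilde F_1,G_1$ built from the nonlinear part of $H$ (Gagliardo--Nirenberg plus the subcritical range of $\sigma$ giving continuity in $L^2$ on bounded sets of $H^1$), feed in the uniform moment bounds of Lemma~\ref{bounds} and the Aldous-type condition of Lemma~\ref{schr-aldous}, obtain $L^2_{\rm loc}$ convergence of the marginals from Rellich on balls plus Prokhorov, and conclude via Theorem~\ref{thm-limit-conservation} and Lemma~\ref{cv-loc-hi}. Two small mislabels: the Aldous condition is hypothesis (b) of Theorem~\ref{thm-limit-conservation}, not condition (iii) of Definition~\ref{energy-evolution} (which is the conservation identity), and ``weak convergence plus norm convergence implies strong convergence'' must be used in its distributional form, i.e.\ Lemma~\ref{cv-loc-hi}, which also requires uniform integrability of $\{\Vert u^n_{t_n}\Vert^2_{H^1}\}$.

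For part (ii) you take a genuinely different route from the paper. The paper does not reuse the energy machinery there at all: it takes $(s_n,v_n)\to(s,v)$ in $[0,1]\times K$ and invokes the $\dbP$-a.s.\ continuous dependence of solutions on the initial condition in $C([0,1];H^1(\dbR^d))$ --- \cite[Proposition~3.5]{DD} together with the regularity \eqref{EQ04} of the stochastic convolution --- so that $\sup_{t\in[0,1]}\Vert u^n_t-u_t\Vert_{H^1}\to0$ almost surely, and then dominated convergence gives $P_{s_n}(v_n,\cdot)\to P_s(v,\cdot)$ weakly on $H^1(\dbR^d)$, which yields relative compactness and hence tightness. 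Your proposal instead reruns the Ball-type argument on the bounded window $[0,s_k]$, with strong $H^1$ convergence of $v_k$ (compactness of $K$) playing the role that the damping factor plays in part (i): the initial terms $e^{-2\lambda s_k}\Vert v_k\Vert_{H^1}^2$ and $\tilde F_1(s_k,v_k)$ then converge exactly rather than being annihilated by $e^{-2\lambda T}$. This can be made to work, but note that Theorem~\ref{thm-limit-conservation} cannot be cited as a black box for it: its hypothesis (c) requires $t_n\to\infty$, and its proof is a contradiction argument hinging on choosing $T$ so that $3\gamma^{1/2}e^{-2\lambda T}\leq\epsilon$; you would have to rerun its Steps 1--3 (including the martingale-solution identification) with your modified mechanism. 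The trade-off: the paper's argument for (ii) is shorter and exploits the known pathwise stability of the solution map from \cite{DD}, while yours is more self-contained and would survive in settings where one only has martingale solutions and no continuous dependence on the data.
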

\colb

Assuming the lemma, 
we now prove the main theorem. The lemma
is then proven in Section~\ref{sec5} below.

\begin{proof}[Proof of Theorem~\ref{T01}]
An invariant measure is constructed using the classical
Krylov-Bogolyubov theorem, which requires the Feller property of the semigroup and the tightness of averaged measures 
  \begin{equation}
    \mu_n(\cdot):=\frac{1}{n}\int_0^n P_t(0,\cdot) dt\period
  \end{equation} 
The Feller property is a consequence of \cite[Proposition~3.5]{DD}. 
Thus in order to conclude the proof, we only need to 
show tightness of the family of measures $\mu_{n}$.

Let $\epsilon>0$. Lemma~\ref{lemma-tightness} applied to the family
$\{P_{k}(0,\cdot);k\in \dbN\}$ gives the existence of a compact set
$K_\epsilon\subseteq H^1(\dbR^d)$ such that   
  \begin{equation}
     \sup_{k} P_k(0,K_\epsilon^c)
     \leq 
     \frac{\epsilon}{2}
   \period
  \end{equation}
We then consider the family of probabilities $\{P_s(v,\cdot): s\in[0,1],\, v\in K_\epsilon\}$. By the second part of Lemma~\ref{lemma-tightness}, this family is tight. 
Therefore, there exists another compact set $A_\epsilon\subseteq H^1(\dbR^d)$ such that 
  \begin{equation}
    \sup_{ s\in[0,1], v\in K_\epsilon} P_s(v,A_\epsilon^c)
    \leq 
    \frac{\epsilon}{2}
   \period
  \end{equation}
By a direct computation 
\begin{align}
\mu_n(A^c_\epsilon)&=\frac{1}{n}\int_0^n P_t(0,A_\epsilon^c) dt   = \frac{1}{n}\sum_{k=0}^{n-1} \int_{k}^{k+1} P_t(0,A_\epsilon^c) dt\\
&=\frac{1}{n}\sum_{k=0}^{n-1} \int_{k}^{k+1} \int_{H^1(\dbR^d)}P_{k} (0,dv) P_{t-k}(v,A_\epsilon^c) dt\\
&=\frac{1}{n}\sum_{k=0}^{n-1} \int_{k}^{k+1} \left(\int_{H^1(\dbR^d)\cap K_\epsilon^c}P_{k} (0,dv) P_{t-k}(v,A_\epsilon^c) +\int_{H^1(\dbR^d)\cap K_\epsilon}P_{k} (0,dv) P_{t-k}(v,A_\epsilon^c) \right)dt
\end{align}
whence
\begin{align}
\mu_n(A^c_\epsilon)
&\leq \frac{1}{n}\sum_{k=0}^{n-1} \left(P_{k} (0,K_\epsilon^c)  +P_{k} (0,K_\epsilon) \sup_{ s\in[0,1],\, v\in K_\epsilon} P_s(v,A_\epsilon^c)\right) \\
&\leq \frac{1}{n}\sum_{k=0}^{n-1} \left(P_{k} (0,K_\epsilon^c)  +\sup_{ s\in[0,1],\, v\in K_\epsilon} P_s(v,A_\epsilon^c)\right) \leq \epsilon
   \period
\end{align}
We have thus shown that the set of measures $\{\mu_n\}$ is
tight, concluding the proof of the theorem.
\end{proof}

The rest of the paper is devoted to the proof of 
Lemma~\ref{lemma-tightness} and to establishing
the compactness of the set of invariant measures.

\startnewsection{An abstract tightness result}{sec4}
In this section, we give certain distributional convergence results that
we use below to prove Lemma~\ref{lemma-tightness}. 

\cole
\begin{Lemma}\label{cv-loc-hi} 
Let $k\in{\mathbb N}_0$, and
let $\xi_n$ and $\xi$ be an $H^k(\dbR^d)$-valued square integrable random variables such that $\xi_n \to \xi$ in distribution in $L^2_{\rm loc}(\dbR^d)$. 
Assume that $\dbE[\Vert \xi_n\Vert^2_{H^k}]\to \dbE[\Vert \xi\Vert^2_{H^k}]$ as $n\to \infty$ and suppose
that the family 
$\{\Vert \xi_n\Vert^2_{H^k}:n\in{\mathbb N}\}$
is uniformly integrable.
Then $\xi_n$ converges to $\xi$ in distribution in $H^k(\dbR^d)$. 
\end{Lemma}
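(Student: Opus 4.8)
The plan is to deduce convergence in distribution in $H^k$ from a \emph{strong} convergence statement in the Hilbert space $\dbL^2(\Omega,H^k)$, obtained after passing to a common probability space. Since $L^2_{\rm loc}(\dbR^d)$ is a separable complete metric (Polish) space and $\xi_n\to\xi$ in distribution there, Skorokhod's representation theorem provides random variables $\tilde\xi_n,\tilde\xi$ on a common probability space $(\tilde\Omega,\tilde\dbF,\tilde\dbP)$ with the same $L^2_{\rm loc}$-laws as $\xi_n,\xi$ and with $\tilde\xi_n\to\tilde\xi$ almost surely in $L^2_{\rm loc}(\dbR^d)$. Because the $H^k$-norm is lower semicontinuous on $L^2_{\rm loc}(\dbR^d)$ (a sequence that is bounded in $H^k$ and converges in $L^2_{\rm loc}$ converges weakly in $H^k$ to the same limit), it is Borel on $L^2_{\rm loc}$; hence the hypotheses transfer, yielding $\dbE[\Vert\tilde\xi_n\Vert_{H^k}^2]=\dbE[\Vert\xi_n\Vert_{H^k}^2]\to\dbE[\Vert\xi\Vert_{H^k}^2]=\dbE[\Vert\tilde\xi\Vert_{H^k}^2]$, the uniform integrability of $\{\Vert\tilde\xi_n\Vert_{H^k}^2\}$, and $\tilde\xi_n,\tilde\xi\in H^k$ almost surely. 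The goal becomes to show $\dbE[\Vert\tilde\xi_n-\tilde\xi\Vert_{H^k}^2]\to0$.

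The analytic core is a weak-plus-norm argument in $\mathcal H:=\dbL^2(\tilde\Omega,H^k)$. First, $\{\tilde\xi_n\}$ is bounded in $\mathcal H$ by the transferred norm convergence. Next I would prove $\tilde\xi_n\rightharpoonup\tilde\xi$ weakly in $\mathcal H$ by testing against the total family of elements $g\otimes h$ with $g\in L^\infty(\tilde\Omega)$ and $h\in C_c^\infty(\dbR^d)$; since $k\in\dbN_0$ is an integer, $(\zeta,h)_{H^k}=(\zeta,(1-\Delta)^kh)_{L^2}=\int_{\dbR^d}\zeta\,\overline{(1-\Delta)^kh}\,dx$ depends only on $\zeta$ over the fixed compact set $\supp h$, so the functional $\zeta\mapsto(\zeta,h)_{H^k}$ is continuous on $L^2_{\rm loc}(\dbR^d)$. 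Thus $(\tilde\xi_n,h)_{H^k}\to(\tilde\xi,h)_{H^k}$ almost surely, and, using $|(\tilde\xi_n,h)_{H^k}|\le\Vert h\Vert_{H^k}\Vert\tilde\xi_n\Vert_{H^k}$ together with the uniform integrability of $\{\Vert\tilde\xi_n\Vert_{H^k}^2\}$, Vitali's theorem gives $\dbE[\overline g\,(\tilde\xi_n,h)_{H^k}]\to\dbE[\overline g\,(\tilde\xi,h)_{H^k}]$. Boundedness together with convergence on a total set yields $\tilde\xi_n\rightharpoonup\tilde\xi$ in $\mathcal H$. Finally, inserting the norm convergence $\Vert\tilde\xi_n\Vert_{\mathcal H}\to\Vert\tilde\xi\Vert_{\mathcal H}$ into $\Vert\tilde\xi_n-\tilde\xi\Vert_{\mathcal H}^2=\Vert\tilde\xi_n\Vert_{\mathcal H}^2-2\re\langle\tilde\xi_n,\tilde\xi\rangle_{\mathcal H}+\Vert\tilde\xi\Vert_{\mathcal H}^2$ forces $\Vert\tilde\xi_n-\tilde\xi\Vert_{\mathcal H}\to0$, that is $\tilde\xi_n\to\tilde\xi$ in $\dbL^2(\tilde\Omega,H^k)$, hence in probability in $H^k$.

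To return to the original statement I must be able to read convergence in $H^k$-distribution off the Skorokhod copies, which a priori carry only the correct $L^2_{\rm loc}$-laws. The step I expect to require the most care is precisely this measure-theoretic matching: the inclusion $H^k(\dbR^d)\hookrightarrow L^2_{\rm loc}(\dbR^d)$ is a continuous injection between Polish spaces, so by the Lusin--Souslin theorem its image is Borel and it is a Borel isomorphism onto that image. Consequently the $L^2_{\rm loc}$-law of an $H^k$-valued variable determines its $H^k$-law, the Skorokhod variables are genuinely $H^k$-valued with the laws of $\xi_n$ and $\xi$, and convergence in probability in $H^k$ on $\tilde\Omega$ gives $\dbE[f(\tilde\xi_n)]\to\dbE[f(\tilde\xi)]$ for every bounded continuous $f\colon H^k\to\dbR$. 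As these expectations depend only on the $H^k$-laws, the same convergence holds for $\xi_n$ and $\xi$, which is exactly convergence in distribution in $H^k(\dbR^d)$ and completes the proof.
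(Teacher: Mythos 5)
Your proof is correct, but it takes a genuinely different route from the paper's. The paper never leaves the original probability space: it fixes a complete orthonormal system $\{f_i\}$ of $H^k(\dbR^d)$ consisting of smooth compactly supported functions, combines the $L^2_{\rm loc}$ convergence, the uniform integrability (via truncation at a level $R$), and the convergence of $\dbE[\Vert\xi_n\Vert^2_{H^k}]$ to establish the flat-concentration property $\lim_{N\to\infty}\sup_n\dbE\bigl[\sum_{i\geq N}|(\xi_n,f_i)_{H^k}|^2\bigr]=0$, and then invokes Prokhorov's tightness criterion to get tightness of the laws in $H^k(\dbR^d)$, identifying the unique limit point as the law of $\xi$. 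You instead move everything to a common probability space by Skorokhod representation, handle the law-matching and measurability issues with Lusin--Souslin (this step is indeed where the care is needed, and you resolve it correctly), and then run a Radon--Riesz argument in the Bochner space $\dbL^2(\tilde\Omega,H^k)$: weak convergence tested on the total family $g\otimes h$ (using that $\zeta\mapsto(\zeta,h)_{H^k}$ is $L^2_{\rm loc}$-continuous for smooth compactly supported $h$ and integer $k$), plus convergence of norms, forces strong convergence of the copies. Your route yields a stronger intermediate conclusion --- convergence in probability, indeed in $\dbL^2(\tilde\Omega,H^k)$, of the representatives --- and, notably, never actually uses the uniform-integrability hypothesis: the $L^2(\tilde\Omega)$-boundedness of $\Vert\tilde\xi_n\Vert_{H^k}$, which already follows from the assumed convergence of second moments, is all the Vitali step needs, so your argument proves the lemma without that assumption. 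What the paper's argument buys in exchange is that it works directly with the laws (no change of probability space, no descriptive-set-theoretic technicalities) and produces the tightness statement in precisely the form in which the lemma is used later in the Krylov--Bogolyubov scheme.
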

\colb

Note that when $k=0$, we have
$H^0(\dbR^d)=L^2(\dbR^d)$.

\begin{proof}[Proof Lemma~\ref{cv-loc-hi}]
 Let $\{f_i\}$ be a complete orthonormal system for $H^k(\dbR^d)$
 consisting of smooth compactly supported functions.
We first claim that
  \begin{equation}\label{limit-Pro}
   \lim_{N\to \infty }\sup_n \dbE\left[\sum_{i=N}^\infty |(\xi_n,f_i)_{H^k}|^2\right]=0
  \end{equation}
which then quickly implies asserted convergence.
Let $\epsilon>0$.
By the uniform integrability assumption, there exists $R>0$ such that 
  \begin{equation}\sup_n \dbE\left[ \Vert \xi_n\Vert^2_{H^k}\1_{ \{\Vert \xi_n\Vert^2_{H^k}\geq R\}}\right] \leq \epsilon
  \end{equation}
and, by possibly enlarging $R$, we may also assume that
  \begin{equation}\dbE\left[ \Vert \xi\Vert^2_{H^k}\1_{ \{\Vert \xi\Vert^2_{H^k}\geq R\}}\right] \leq \epsilon\period
  \end{equation}
For all $N\in{\mathbb N}$, the convergence in distribution in $L^2_{\rm loc}(\dbR^d)$ and the fact that $\{f_i\}$ have compact support imply
   \begin{equation}
   \dbE\left[\left(\sum_{i=1}^N |(\xi_n,f_i)_{H^k}|^2\right)\wedge R\right]\to\dbE\left[\left(\sum_{i=1}^N |(\xi,f_i)_{H^k}|^2\right)\wedge R\right]
  \end{equation}
  as $n\to\infty$.
Since
  \begin{align}
     &\left|\dbE\left[\sum_{i=1}^N |(\xi_n,f_i)_{H^k}|^2\right] -\dbE\left[\sum_{i=1}^N |(\xi,f_i)_{H^k}|^2\right]\right|
   \nonumber\\&\indeq
    \leq \left|\dbE\left[\left(\sum_{i=1}^N |(\xi_n,f_i)_{H^k}|^2\right)\wedge R\right]-\dbE\left[\left(\sum_{i=1}^N |(\xi,f_i)_{H^k}|^2\right)\wedge R\right]\right|
   \nonumber\\&\indeq\indeq
+ \dbE\left[ \Vert \xi_n\Vert^2_{H^k}\1_{ \{\Vert \xi_n\Vert^2_{H^k}\geq R\}}\right] + \dbE\left[ \Vert \xi\Vert^2_{H^k}\1_{ \{\Vert \xi\Vert^2_{H^k}\geq R\}}\right]
  \end{align}
we have that
  \begin{equation}\lim_n \dbE\left[\sum_{i=1}^N |(\xi_n,f_i)_{H^k}|^2\right] =\dbE\left[\sum_{i=1}^N |(\xi,f_i)_{H^k}|^2\right]\period
  \end{equation}
This convergence combined with the assumption $\dbE[\Vert \xi_n\Vert^2_{H^k}]\to \dbE[\Vert \xi\Vert^2_{H^k}]$ implies 
  \begin{equation}\label{convergence-remainder}
   \dbE\left[\sum_{i=N+1}^\infty |(\xi_n,f_i)_{H^k}|^2\right]\to\dbE\left[\sum_{i=N+1}^\infty |(\xi,f_i)_{H^k}|^2\right]
   \period
  \end{equation}
Since $\xi$ is $H^k(\dbR^d)$-square integrable,
there is $N_0\in{\mathbb N}_0$ such that 
  \begin{equation}
   \dbE\left[\sum_{i=N_0+1}^\infty |(\xi,f_i)_{H^k}|^2\right]
   \leq 
    \frac{\epsilon}{2}
   \period
   \label{EQ26}
  \end{equation}
Then, using \eqref{convergence-remainder}, there exists
$n_\epsilon\in{\mathbb N}$ 
for which
  \begin{equation}
   \sup_{n\geq n_\epsilon}
       \dbE\left[\sum_{i=N_0+1}^\infty |(\xi_n,f_i)_{H^k}|^2\right]\leq \epsilon   
   \period
   \label{EQ27}
  \end{equation}
The family $\{\Vert\xi_n\Vert_{H^k}: n=1,\ldots, n_\epsilon-1\}$ is square integrable. Therefore,
  \begin{equation}
   \lim_{N\to \infty}
      \dbE\left[\sum_{i=N}^\infty |(\xi_n,f_i)_{H^k}|^2\right]=0   
   \comma n\leq n_{\epsilon}-1
   \period
   \label{EQ29}
  \end{equation}
By \eqref{EQ27} and \eqref{EQ29}, there exists
$N_1\geq N_0$ such that
  \begin{equation}
    \sup_{n\in{\mathbb N}}
    \dbE\left[\sum_{i=N_1+1}^\infty |(\xi_n,f_i)_{H^k}|^2\right]\leq \epsilon
   \period
   \label{EQ33}
  \end{equation}
Therefore, \eqref{limit-Pro} is established.

By  \cite[Theorem~1.13]{Pr}, 
the convergence \eqref{limit-Pro} then
implies the tightness in distribution in $H^k(\dbR^d)$ of the laws of $\{\xi_n\}$. Note that any limiting measure can only be the distribution of $\xi$. Thus 
  \begin{equation}\label{limit-l2}
   \xi_n\to \xi
  \end{equation}
 in distribution in $H^k(\dbR^d)$. 
\end{proof}

We shall work on the space 
$\cZ=C([0,T]; L^{2}_{\rm loc}(\dbR))$.
Denote by $z$ the canonical process on this space 
and $\cD$ its right continuous filtration. 
We state our main theorem for an SPDE of the form 
  \begin{align}\label{EQ00}
   &du(t) = \bigl(
              -i \Delta u(t)+b(u(t))
            \bigr)dt +\Phi dW_t
  \end{align}
with
  \begin{align}
   &u(t)\in L^2_{\rm loc}(\dbR)\notag
  \end{align}
where $b\colon \dbC \to \dbC$
is, for simplicity,
a sum of terms of the form $u^{m} |u|^{a}$ where $m\in{\mathbb N_0}$ and 
$a\ge0$.
Its maximal degree ($\max\{m+a\}$) is assumed to be less than 
  \begin{equation*}
   \frac{2 d}{d-2k}
  \end{equation*}
if $d>2k$.

\begin{definition}\label{defn-weak-sol}
{\rm A measure $\nu$ on $\cZ$  is a martingale solution of the
equation \eqref{EQ00} if for all $\phi$ smooth and compactly
supported functions
  \begin{equation}
    \int_0^T \big(|b(z_s)|,|\phi|\bigr) ds <\infty, \, \nu \mbox{-a.s.}
  \end{equation}
and  if
  \begin{equation}
      M^\phi_t
         =(z_t-z_0,\phi) -\int_0^t (-i\Delta z_s +b(z_s),\phi)ds
                                              \label{EQ38}
  \end{equation}
and 
  \begin{equation}
    (M^\phi_t)^2-\int_0^t \sum_i \left(\Phi e_i,\phi\right)^2ds
  \label{EQ48}
  \end{equation}
are $\nu$-local martingales.
We say that $\nu$ is a $H^k$ square integrable martingale solution if 
  \begin{equation}\sup_{t\in [0,T]} \dbE^\nu \left[\Vert z_t\Vert_{H^k}^2\right]<\infty\period
  \end{equation}
}
\end{definition}

\begin{Remark}{\rm
i) Note that a martingale solution can be obtained from any strong solution of \eqref{EQ00}. Indeed, let $u$ be a solution of \eqref{EQ00} on the interval $[0,T]$. 
Define the measure 
  \begin{equation}
      \nu(dz) 
        =\int_\Omega \delta_{\{\{u_{s}(\omega)\}_{s\in [0,T]}\}}(dz)\dbP(d\omega)
   \label{EQ39}
  \end{equation}
meaning the measure on $\cZ$ such that for all continuous bounded $F\colon\cZ\to\dbR$
we have
    \begin{equation}
      \int_\cZ F(z)\nu (dz)= \dbE[F(\{u(s)\}_{s\in[0,T]})]
   \period
    \end{equation}}
\end{Remark}

In order to facilitate the statement of the main result of this section, 
we introduce the concept of $H^{k}$-evolution property.

\begin{definition}\label{energy-evolution}
{\rm Let $k\in{\mathbb N}$.
The equation \eqref{EQ00} has the $H^k$-norm evolution property if  
for $i=0,\ldots, k$ 
there exist continuous functions
$F_i\colon L^2_{\rm loc}\to \dbR$, 
$\tilde F_i \colon\dbR \times L^2_{\rm loc}\to \dbR$, 
and $  G_i \colon\dbR \times \dbR \times L^2_{\rm loc}\to \dbR$
satisfying the following conditions.\\
i) For all $t,r$ the functions $F_i(\cdot),\tilde F_i(t,\cdot)$ and $G_i(t,r,\cdot)$ are continuous in $H^{i-1}$-topology on bounded sets of $H^{i}$ (for $i=0$ we require the continuity in $L^2_{\rm loc}$ on bounded sets of $L^2$).\\
ii) For all $t,r$ the functions $F_i(\cdot),\tilde F_i(r,\cdot)$ and $G(r,\cdot)$  have at most polynomial growth in the $H^i$-norm. \\
iii) For all $H^k$ square integrable martingale solutions $\nu$ of \eqref{EQ00}, the conservation equality 
  \begin{equation}\label{conserved-quantities}
   \dbE^\nu\left[\Vert z_t\Vert^2_{H^i}\right]-e^{-2\lambda (t-s)}\dbE^\nu\left[\Vert z_s\Vert^2_{H^i}\right]=\dbE^\nu\left[F_i(z_t)-\tilde F_i(t-s,z_s)\right]
   +\int_s^t \dbE^\nu\left[G_i(t-r,z_r)\right]dr
  \end{equation}
holds for all $t\ge s$.
}
\end{definition}

We now state a theorem, 
which, combined with Lemma~\ref{cv-loc-hi}, 
gives us a tightness result needed for 
the Krylov-Bogolyubov procedure. 

\cole
\begin{Theorem}\label{thm-limit-conservation}
Assume that the equation \eqref{EQ00} 
has the $H^k$-norm evolution property,
and let $u^n$ be a sequence of 
strong solutions of \eqref{EQ00} satisfying
the following conditions:\\
a) We have a uniform bound 
  \begin{equation}\label{unif-bound-theo}
   \gamma=
  \sup_{r\geq 0}\sup_{k\geq i\geq 0} \sup_{n,t} \dbE\left[\Vert b(u^n_t)\Vert_{L^1}^2+| F_i(u^n_t)|^2+|\tilde F_i(r,u^n_t)|^2+|G_i(r,u^n_t)|^2+ \Vert u^n_t\Vert^4_{H^k}\right]<\infty
   \period
\end{equation}
b) For every sequence of stopping times $T_n$ and positive numbers $\d_n$ such that $\d_n\to 0$ as $n\to \infty$, we have
  \begin{equation}\label{aldous-criterion}
   \dbE\left[\Vert u^n_{T_n+\d_n}-u^n_{T_n}\Vert^2_{L^2}\right]\to 0\mbox{~~as~} n\to \infty
   \period
  \end{equation} 
c) There exists a sequence 
$t_n\to \infty $ and an $H^k$-valued random variable 
$\xi$ such that 
$u^n_{t_n}\to \xi$ in distribution in $L^2_{\rm loc}$. \\
Then $\dbE[\Vert u^n_{t_n}\Vert^2_{H^k}]\to \dbE[\Vert \xi\Vert^2_{H^k}]$ as $n\to \infty$.
\end{Theorem}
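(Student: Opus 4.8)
The plan is to prove, by induction on the regularity index $i=0,1,\dots,k$, the a priori more general statement $S(i)$: for every sequence of times $s_n\to\infty$ and every $H^k$-valued square integrable random variable $\zeta$ with $u^n_{s_n}\to\zeta$ in distribution in $L^2_{\rm loc}$, one has $\dbE[\Vert u^n_{s_n}\Vert^2_{H^i}]\to\dbE[\Vert\zeta\Vert^2_{H^i}]$; the case $i=k$ (with $s_n=t_n$, $\zeta=\xi$) is the theorem. Throughout I fix a window length $\tau>0$, consider the shifted solutions $\bar u^n_\cdot=u^n_{s_n-\tau+\cdot}$ on $[0,\tau]$, and let $\nu^n$ be the martingale solutions they induce (cf.\ the Remark after Definition~\ref{defn-weak-sol}). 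Using the moment bound \eqref{unif-bound-theo} together with the compact embedding of bounded sets of $H^k$ into $L^2_{\rm loc}$, and the Aldous-type estimate \eqref{aldous-criterion}, the family $\{\nu^n\}$ is tight on $\cZ$; passing to a subsequence gives $\nu^n\to\nu$ weakly, and a limit-passage in the martingale problem shows that $\nu$ is again an $H^k$ square integrable martingale solution, so that \eqref{conserved-quantities} applies to $\nu$. Since evaluation at a fixed time is continuous on $\cZ$, the terminal marginal satisfies $z_\tau\stackrel{d}{=}\zeta$ in $L^2_{\rm loc}$, and because $v\mapsto\Vert v\Vert^2_{H^i}$ is $L^2_{\rm loc}$-lower semicontinuous, hence Borel, this yields $\dbE^\nu[\Vert z_\tau\Vert^2_{H^i}]=\dbE[\Vert\zeta\Vert^2_{H^i}]$ for every $i\le k$.

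The base case $S(0)$ is proven directly. The lower bound $\dbE[\Vert\zeta\Vert^2_{L^2}]\le\liminf_n\dbE[\Vert u^n_{s_n}\Vert^2_{L^2}]$ follows from weak lower semicontinuity: for each $N$ the truncation argument from the proof of Lemma~\ref{cv-loc-hi} gives $\dbE[\sum_{i\le N}|(u^n_{s_n},f_i)_{L^2}|^2]\to\dbE[\sum_{i\le N}|(\zeta,f_i)_{L^2}|^2]$, and letting $N\to\infty$ concludes. For the upper bound I apply \eqref{conserved-quantities} with $i=0$ on $[s_n-\tau,s_n]$; since $F_0,\tilde F_0,G_0$ are continuous in the $L^2_{\rm loc}$-topology, the marginal convergence $\bar u^n_r\to z_r$ in $L^2_{\rm loc}$ (valid at every $r$) and the uniform integrability furnished by \eqref{unif-bound-theo} let me pass to the limit in all of the $F_0,\tilde F_0,G_0$ terms, while the initial term is bounded by $e^{-2\lambda\tau}\sqrt{\gamma}$. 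Substituting \eqref{conserved-quantities} for the limit $\nu$ and discarding the nonnegative term $e^{-2\lambda\tau}\dbE^\nu[\Vert z_0\Vert^2_{L^2}]$ gives $\limsup_n\dbE[\Vert u^n_{s_n}\Vert^2_{L^2}]\le e^{-2\lambda\tau}\sqrt{\gamma}+\dbE[\Vert\zeta\Vert^2_{L^2}]$, and letting $\tau\to\infty$ closes the base case.

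For the step $S(i-1)\Rightarrow S(i)$ the new ingredient is to upgrade the intermediate-time convergence to $H^{i-1}$. For each fixed $r\in[0,\tau]$ the times $s_n-\tau+r$ tend to infinity and $\bar u^n_r\to z_r$ in $L^2_{\rm loc}$, so the inductive hypothesis $S(i-1)$, applied to this shifted sequence, gives $\dbE[\Vert\bar u^n_r\Vert^2_{H^{i-1}}]\to\dbE^\nu[\Vert z_r\Vert^2_{H^{i-1}}]$; combined with the uniform integrability from \eqref{unif-bound-theo}, Lemma~\ref{cv-loc-hi} then promotes this to $\bar u^n_r\to z_r$ in distribution in $H^{i-1}$ for every $r$. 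This is precisely the topology in which $F_i,\tilde F_i,G_i$ are continuous on bounded sets of $H^i$ (condition (i) of Definition~\ref{energy-evolution}), so, after a truncation in the $H^i$-norm to reduce to bounded sets and using the moment control \eqref{unif-bound-theo} to dominate the tails, I may pass to the limit in the $F_i,\tilde F_i$ terms at the endpoints and, by dominated convergence in $r$ (the integrands being bounded by $\sqrt{\gamma}$), in the $G_i$ integral. Repeating the limsup computation of the base case at level $i$ — bounding the initial term by $e^{-2\lambda\tau}\sqrt{\gamma}$, substituting \eqref{conserved-quantities} for $\nu$, discarding $e^{-2\lambda\tau}\dbE^\nu[\Vert z_0\Vert^2_{H^i}]\ge0$, using $\dbE^\nu[\Vert z_\tau\Vert^2_{H^i}]=\dbE[\Vert\zeta\Vert^2_{H^i}]$, and letting $\tau\to\infty$ — together with the weak lower semicontinuity bound yields $S(i)$. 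Since every subsequence admits a further subsequence producing the same limit $\dbE[\Vert\zeta\Vert^2_{H^i}]$, the full sequence converges.

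I expect the main obstacle to be the identification of the weak limit $\nu$ as a martingale solution of \eqref{EQ00}: passing to the limit in the nonlinear term $\int_0^t(b(z_s),\phi)\,ds$ under only $L^2_{\rm loc}$-convergence of trajectories is where the degree restriction on $b$ (the bound $2d/(d-2k)$) and the $H^k$ moment bound \eqref{unif-bound-theo} must be combined to secure equi-integrability of the drift. A second, more conceptual subtlety is the inductive device itself, which breaks the apparent circularity: the intermediate-time $H^{i-1}$ convergence needed to treat the $\tilde F_i$ and $G_i$ contributions at times other than $s_n$ is not available from the hypotheses directly, and is instead extracted from $S(i-1)$ applied to the shifted time sequences, while the dissipative factor $e^{-2\lambda\tau}$ is what allows the uncontrolled initial-time term to be discarded after sending $\tau\to\infty$.
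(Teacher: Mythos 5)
Your proposal is correct and follows essentially the same route as the paper's own proof: the same shifted window, the path-space measures $\nu^n$ made tight via the Aldous criterion, identification of the weak limit $\nu$ as an $H^k$ square integrable martingale solution so that \eqref{conserved-quantities} can be invoked for $\nu$, the induction in which Lemma~\ref{cv-loc-hi} upgrades the intermediate-time convergence to $H^{i-1}$ (your explicit strengthened statement $S(i)$, quantified over all time sequences, is exactly what the paper uses implicitly when it applies its induction hypothesis to the shifted times $t_n-T+s$), and the dissipative factor $e^{-2\lambda \tau}$ disposing of the initial term; your direct limsup-plus-subsequence-principle formulation is merely a reorganization of the paper's argument by contradiction with $3\gamma^{1/2}e^{-2\lambda T}\le\epsilon$. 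The one substantive step you flag as an obstacle but do not carry out—that the limit $\nu$ is indeed a martingale solution—is the paper's Step~2, executed there by truncating $M^\phi_t$ with smooth cutoffs $\Psi_R$, using the bound $|M^\phi_t|^2\le K_{\phi,T}\bigl(\Vert z_0\Vert^2_{L^2}+\Vert z_t\Vert^2_{L^2}+\int_0^t (\Vert z_r\Vert^2_{L^2}+\Vert b(z_r)\Vert^2_{L^1}) dr\bigr)$, and identifying the quadratic variation through the Brownian-motion characterization, precisely from the ingredients (degree restriction on $b$ and the moment bound \eqref{unif-bound-theo}) that you name.
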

\colb

\begin{Remark}{\rm
The powers in \eqref{unif-bound-theo} 
have been chosen so we can obtain the uniform
integrability of the family and then the De la Vallee Poussin's 
theorem can be applied.}
\end{Remark}

\begin{proof}[Proof of Theorem~\ref{thm-limit-conservation}] Since
    
  \begin{equation}\liminf_n \dbE[\Vert u^n_{t_n}\Vert^2_{H^k}]\geq\dbE[\Vert\xi\Vert^2_{H^k}]
  \end{equation}
we only need to 
prove 
  \begin{equation}
   \limsup_n \dbE[\Vert u^n_{t_n}\Vert^2_{H^k}]\leq\dbE[\Vert\xi\Vert^2_{H^k}]
   \period
   \label{EQ06}
  \end{equation}
We establish \eqref{EQ06} 
by induction on $k$, reasoning  by contradiction at each step. 
For $k=0$ (cf.~Step~1), we use \eqref{aldous-criterion} 
and the Aldous's criterion to obtain 
a compactness of measures induced by the process $\{u^n\}$. 
Then using \eqref{conserved-quantities} 
for a limiting measure we obtain a contradiction. 

{\it Step 1:} First we prove \eqref{EQ06} for $k=0$.
We assume that the convergence does not hold. This  means that, passing to a subsequence, 
there exists $\epsilon>0$ such that
  \begin{equation}\dbE[ \Vert u^n_{t_n}\Vert^2_{L^2}] \geq \dbE[ \Vert \xi\Vert^2_{L^2}]+\epsilon
     \comma n\in{\mathbb N}
   \label{EQ08}
   \period
  \end{equation}
We now pick $T>0$ such that 
$3 \gamma^{1/2} e^{-2\lambda T}\leq \epsilon$.
Note that, by \eqref{unif-bound-theo}, the sequence 
$\{u^n_{t_n-T}\}$ satisfies  
  \begin{equation} 
   \sup_{n} \dbE\left[\Vert u^n_{t_n-T}\Vert^4_{L^2}\right]\leq \gamma
   \period
  \end{equation}
Therefore, passing to a further subsequence, there exists 
an $L^2$-valued random variable $\xi_{-T}$ 
such that $u^n_{t_n-T}$ converges in distribution in 
$L^2_{\rm loc}(\dbR^d)$ to $\xi_{-T}$. 
Define a sequence of measures $\nu^n$ on $\cZ$ by
  \begin{equation}
      \nu^n(dz) 
        =\int_\Omega \delta_{\{\{u^n_{t_n-T+r}(\omega)\}_{r\in [0,T]}\}}(dz)\dbP(d\omega)
   \period
  \end{equation}
The assumption \eqref{aldous-criterion} and the Aldous criterion
\cite[Theorem~16.10]{B} imply that the sequence  
$\{\nu^n\}_{n=1}^{\infty}$ is tight in distribution in $\cZ$. 
Taking a further subsequence, 
we obtain the existence of $\nu$ such that 
  \begin{equation}
   \dbE^{\nu^n}\left[F(z)\right] =\dbE\left[F(\{u^n_{t_n-T+s}\}_{s\in[0,T]})\right] \to   \dbE^{\nu}\left[F(z)\right]\mbox{~~as~}n\to \infty
   \comma F\in C_{b}(\cZ)
   \period
  \end{equation}
Identifying the marginals, we easily see that the distribution of $z_T$ under $\nu$ is the same as the distribution of $\xi$. Similarly,  the distribution of $z_0$ under $\nu$ is the same as the distribution of $\xi_{-T}$.
We write the equation \eqref{conserved-quantities} at times $t_n$ and $t_n-T$ for the measure $\nu^n$
  \begin{equation}\label{norm-evolution-n}
  \dbE\left[\Vert u^n_{t_n}\Vert^2_{L^2}\right]-e^{-2\lambda T }\dbE\left[\Vert u^n_{t_n-T}\Vert_{L^2}^2\right]=\dbE\left[F_0(u^n_{t_n})-\tilde F_0(T,u^n_{t_n-T})\right]
+\int_0^T \dbE\left[G_0(T-r,u^n_{t_n-T+r})\right]dr\period
  \end{equation}
We claim that by the assumptions (i), (ii), and (a), 
we have sufficient integrability and continuity at the right hand side of the equation to use the convergence of $\nu^n$ to $\nu$ and pass to the limit to obtain 
  \begin{equation}\lim_n \Bigl(
                   \dbE\left[\Vert u^n_{t_n}\Vert^2_{L^2}\right]-e^{-2\lambda T }\dbE\left[\Vert u^n_{t_n-T}\Vert^2_{L^2}\right]
                         \Bigr)
      =\dbE^\nu\left[F_0(z_T)-\tilde F_0(T,z_0)\right]
+\int_0^T \dbE^\nu\left[G_0(T-r,z_r)\right]dr\period\end{equation}
Indeed, the convergence of $\nu^n$ to $\nu$ in $\cZ$ implies that for all $s\in [0,T]$ and every function $\xi\colon L^2_{\rm loc}(\dbR^d)\to \dbR$ continuous and bounded we have
  \begin{equation}\dbE\left[\xi(u^n_{t_n-T+s})\right]\to\dbE^\nu \left[\xi(z_s)\right]\period\end{equation} 
Note that, by the assumption (i), the mappings $F_0(\cdot)$, $\tilde F_0(T,\cdot)$, and $G_0(T-s,\cdot)$ are continuous in $L^2_{\rm loc}(\dbR^d)$ on bounded sets of $L^2(\dbR^d)$. Additionally the assumption (ii) and the uniform bound \eqref{unif-bound-theo} 
allows us to truncate $F_0(\cdot)$, $\tilde F_0(T,\cdot)$, and $G_0(T-s,\cdot)$ when they are large in order to obtain
  \begin{equation}\dbE\left[\psi(u^n_{t_n-T+s})\right]\to\dbE^\nu \left[\psi(z_s)\right]\end{equation} 
for $\psi=F_0(\cdot)$, $\psi=\tilde F(T,\cdot)$, and $\psi=G_0(T-s,\cdot)$. Thus
  \begin{equation}\lim_n \Bigl(\dbE\left[\Vert u^n_{t_n}\Vert^2_{L^2}\right]-e^{-2\lambda T }\dbE\left[\Vert u^n_{t_n-T}\Vert^2_{L^2}\right]\Bigr)=\dbE^\nu\left[F_0(z_T)-\tilde F_0(T,z_0)\right]
+\int_0^T \dbE^\nu\left[G_0(T-r,z_r)\right]dr\period\end{equation}
We shall show in  Step~3 that $\nu$ is a $L^2$-square integrable martingale solution of \eqref{EQ00}. Using this result and
by the assumption \eqref{conserved-quantities} one has
  \begin{equation}\dbE^\nu\left[F_0(z_T)-\tilde F_0(T,z_0)\right]
+\int_0^T \dbE^\nu\left[G_0(T-r,z_r)\right]dr=\dbE^\nu\left[\Vert z_T\Vert^2_{L^2}\right]-e^{-2\lambda T}\dbE^\nu\left[\Vert z_0\Vert^2_{L^2}\right]
   \period
\end{equation}
Noting the bound \eqref{unif-bound-theo}, we may pass to the limit and obtain
  \begin{align}
    &\lim_n \left(\dbE\left[\Vert u^n_{t_n}\Vert^2_{L^2}\right]-e^{-2\lambda T }\dbE\left[\Vert u^n_{t_n-T}\Vert^2_{L^2}\right]\right)
    \nonumber\\&\indeq
    =\dbE^\nu\left[\Vert z_T\Vert^2_{L^2}\right]-e^{-2\lambda T}\dbE^\nu\left[\Vert z_0\Vert^2_{L^2}\right]
    \nonumber\\&\indeq
   =\dbE\left[\Vert \xi\Vert^2_{L^2}\right]-e^{-2\lambda T}\dbE\left[\Vert \xi_{-T}\Vert^2_{L^2}\right]
   \period
   \label{EQ05}
  \end{align}
By convergence of $u_{t_n-T}^{n}$ to $\xi_{-T}$ in distribution in $L^2_{\rm loc}$ and the Fatou's lemma, we obtain
 \begin{equation}
      \dbE\left[\Vert \xi_{-T}\Vert^2_{L^2}\right]
   \leq \liminf_n \dbE\left[\Vert u^n_{t_n-T}\Vert^2_{L^2}\right] \leq \gamma^{1/2}
  \period\end{equation}
Using \eqref{EQ08} and \eqref{EQ05}, we obtain
  \begin{align}
    \epsilon &\leq 
    \liminf_n
      \dbE\left[\Vert u^n_{t_n}\Vert^2_{L^2}\right]-  \dbE\left[\Vert \xi\Vert^2_{L^2}\right]
   \nonumber\\&
    \leq \limsup_n e^{-2\lambda T }\left(\dbE
      \left[\Vert  u^n_{t_n-T}\Vert^2_{L^2}\right] 
      -\dbE\left[\Vert \xi_{-T}\Vert^2_{L^2}\right]\right)\leq \gamma^{1/2} e^{-2\lambda T } \leq \frac{2\epsilon}{3}
  \end{align}
which is a contradiction.

{\it Step 2:}  Now we prove that $\nu$ is a  $L^2$-square integrable martingale solution of \eqref{EQ00}.
Note that the uniform bound \eqref{unif-bound-theo}, the lower semicontinuity of the $L^2$ norm with respect to the $L^2_{\rm loc}$ topology, and the distributional convergence of $\nu^n$ to $\nu$ give that for all $t\in[0,T]$ 
  $$
   \dbE^\nu \left[\Vert z_t\Vert_{L^2}^2\right]\leq \gamma
   \period
  $$
Additionally the choice of the power for $b$ implies that the mapping $z\in\cZ \to M_t^{\phi}(z)$ is continuous. Thus for all $f$ bounded continuous 
  $$\lim_n \dbE^{\nu^n}[f(M_t^\phi)]= \dbE^{\nu}[f(M_t^\phi)]
   \period$$
For all $\phi$ smooth, there exists $K_{\phi,T}$ depending only on $\phi$ and $T$ such that 
\begin{align}\label{bound-mphit}
|M_t^\phi|^2 \leq  K_{\phi,T}\left(\Vert z_0\Vert^2_{L^2}+\Vert z_t\Vert^2_{L^2}+\int_0^t \Vert z_r\Vert^2_{L^2}+\Vert b(z_r)\Vert^2_{L^1} dr\right)
   \comma t\in[0,T]
   \period
\end{align}
We will use these points to prove that $M_t^\phi$ is a martingale under $\nu$.
We fix a family of smooth truncation functions $\Psi_R$ satisfying $|\Psi_R|\leq 2R$ and $\Psi_R(x)=x$ if $|x|\leq R$. For $0\leq s_1\leq s_2\leq\ldots\leq s_m\leq s\leq t\leq T$ smooth compactly supported functions functions $\phi_i$ and a random variable of the form $F=F(\int \phi_1 z_{s_1} dx,\ldots,\int \phi_m z_{s_m}dx)$ smooth and bounded by $1$, we have the equalities
\begin{align*}
\left | \dbE^\nu \left[ (M^{\phi}_t-M^\phi_s)F\right]\right|&= \left | \dbE^\nu \left[ (\Psi_R(M^{\phi}_t)-\Psi_R(M^\phi_s))F\right] \right| + \dbE^\nu \left[ |\Psi_R(M^{\phi}_t)-M^{\phi}_t|\right]+\dbE^\nu \left[ |\Psi_R(M^{\phi}_s)-M^{\phi}_s|\right]\\
&=\lim_n \left | \dbE^{\nu^n} \left[ (\Psi_R(M^{\phi}_t)-\Psi_R(M^\phi_s))F\right] \right| + \frac{1}{R}\left( \dbE^\nu \left[ |M^{\phi}_t|^2\right]+\dbE^\nu \left[ |M^{\phi}_s|^2\right]
\right) \\
&=\lim_n \left | \dbE^{\nu^n} \left[ (M^{\phi}_t-M^\phi_s)F\right] \right| + \frac{4T\gamma K_{\phi,T}}{R} 
   \period
\end{align*}
Note that by the martingale property of $M_t^\phi$ under $\nu^n$ we have $\dbE^{\nu^n} \left[ (M^{\phi}_t-M^\phi_s)F\right] =0$. Thus, taking $R$ to infinity we get
$ \dbE^\nu \left[ (M^{\phi}_t-M^\phi_s)F\right]=0$ which is sufficient to claim that $M_t^\phi$ is a $\nu$ martingale. 
Due to the smoothness of $\phi$, the continuity of $z$ in $L^2_{\rm
loc}$ and \eqref{bound-mphit}, $M_t^\phi$ is a continuous and square
integrable martingale under $\nu$. We now proceed to characterize its quadratic variation. 

By the definition of martingale solutions under $\nu^n$ the process $\fractext{M_t^\phi}{\sqrt{\sum_i (\Phi e_i,\phi)^2}}$ is a Brownian motion and thus has Gaussian independent increments. By the distributional convergence of $\nu^n$ and the continuity of $\fractext{M_t^\phi}{\sqrt{\sum_i (\Phi e_i,\phi)^2}}$ in the $L^2_{\rm loc}$ topology with respect to $z$ the distribution and the independence of the increments still hold under $\nu$. Thus the continuous process $\fractext{M_t^\phi}{\sqrt{\sum_i (\Phi e_i,\phi)^2}}$ is a Brownian motion under $\nu$ which implies that \eqref{EQ48} holds under $\nu$.

{\it Step 3:} For the induction step, assume that 
$\dbE[\Vert u^n_{t_n}\Vert^2_{H^r}]\to \dbE[\Vert \xi\Vert^2_{H^r}]$ 
as $n\to \infty$ for $r=0,\ldots,k-1$. 
We need to show that 
  \begin{equation}\dbE[\Vert u^n_{t_n}\Vert^2_{H^k}]\to \dbE[\Vert \xi\Vert^2_{H^k}]\period\end{equation}
Note that using Lemma~\ref{cv-loc-hi} at each step of 
the induction  one can also show that 
  \begin{equation}u^n_{t_n}\to \xi\end{equation} in distribution in $H^r(\dbR^d)$ for all $r\leq k-1$. 

In order to obtain a contradiction, 
assume that the convergence we are proving does not hold.
This means that, up to a subsequence,  there exists $\epsilon>0$ such that 
  \begin{equation}\dbE[ \Vert u^n_{t_n}\Vert^2_{H^k}] \geq \dbE[ \Vert \xi\Vert^2_{H^k}]+\epsilon
   \comma n\in{\mathbb N}\period\end{equation}
Similarly to the previous step, we introduce $T$ such that $3\gamma^{1/2} e^{-2\lambda T}\leq \epsilon$ and define the measures $\nu^n$ on $\cZ$. We also prove similarly that there exist an $H^k$-valued random variable $\xi_{-T}$, a distribution $\nu$ on $\cZ$ which is a $H^k$ square integrable solution of \eqref{EQ00} and a subsequence of $t_n$ (still denoted $t_n$) such that $\nu^n\to \nu$ on $\cZ$ as $n\to\infty$ and $u^n_{t_n-T}\to \xi_{-T}$ in distribution in $L^2_{\rm loc}(\dbR^d)$.  
Note that for all $s\in[0,T]$ the family $u^n_{t_n-T+s}$ converges in distribution in $L^2_{\rm loc}(\dbR^d)$ to the distribution of $z_s$ under $\nu$. Therefore, using the induction hypothesis on the family  $u^n_{t_n-T+s}$ and $z_s$ we obtain 
  \begin{equation}
    u^n_{t_n-T+s}\to z_s
  \end{equation}
in distribution in $H^r(\dbR^d)$
and
  \begin{equation}
    \dbE[\Vert u^n_{t_n-T+s}\Vert^2_{H^r}]\to \dbE^\nu [\Vert z_s\Vert^2_{H^r}]
  \end{equation}
for $r\leq k-1$ as $n\to \infty$.  

We first use \eqref{conserved-quantities} on $\nu^n$ for $k$ to obtain 
  \begin{equation}
   \dbE\left[\Vert u^n_{t_n}\Vert^2_{H^k}\right]-e^{-2\lambda T}\dbE\left[\Vert  u^n_{t_n-T}\Vert^2_{H^k}\right]=\dbE\left[F_k( u^n_{t_n})-\tilde F_k(T, u^n_{t_n})\right]
+\int_0^T \dbE\left[G_k(T-s, u^n_{t_n-T+s})\right]ds
   \period
\end{equation}
We have proven that the distribution of $u^n_{t_n-T+s}$ converges in distribution in $H^{k-1}(\dbR^d)$ to the distribution of $z_s$ under $\nu$.
Similarly to the previous step, we have enough integrability and continuity on the right hand side of the equation to use this convergence and pass to the limit to obtain
  \begin{equation}\lim_n \dbE\left[\Vert u^n_{t_n}\Vert^2_{H^k}\right]-e^{-2\lambda T}\dbE\left[\Vert  u^n_{t_n-T}\Vert^2_{H^k}\right]=\dbE^\nu\left[F_k(z_T)-\tilde F_k(T,z_0)\right]
+\int_0^T \dbE^\nu\left[G_k(T-s,z_s)\right]ds\period
  \end{equation}
Using Fatou's lemma and \eqref{unif-bound-theo} we have that $\nu$ is a $H^k(\dbR^d)$ square integrable solution of \eqref{EQ00}. Thus, by assumption, Definition~\ref{energy-evolution}~(iii) gives
  \begin{equation}\dbE^\nu\left[\Vert z_T\Vert^2_{H^k}\right]-e^{-2\lambda T}\dbE^\nu\left[\Vert  z_0\Vert^2_{H^k}\right]=\dbE^\nu\left[F_k(z_T)-\tilde F_k(T,z_0)\right]
+\int_0^T \dbE^\nu\left[G_k(T-s,z_s)\right]ds,
  \end{equation}
which implies 
  \begin{equation}\lim_n \Bigl(\dbE\left[\Vert u^n_{t_n}\Vert^2_{H^k}\right]-e^{-2\lambda T}\dbE\left[\Vert  u^n_{t_n-T}\Vert^2_{H^k}\right]\Bigr)=\dbE^\nu\left[\Vert z_T\Vert^2_{H^k}\right]-e^{-2\lambda T}\dbE^\nu\left[\Vert  z_0\Vert^2_{H^k}\right]\period
  \end{equation}
Using the same arguments as in the previous step, we obtain a contradiction. 
\end{proof}

\startnewsection{Proofs of tightness for the Schr\"odinger equation}{sec5}
We now return to the Schr\"odinger equation \eqref{equation-schro}.
We fix $\lambda>0$; thus all the constants are allowed to
depend on $\lambda$.
Also, recall that we impose 
Assumptions~\ref{assumption-existence-schro}
on $\sigma$ and $\Phi$.

\cole
\begin{Lemma}\label{bounds}
For every $k\in {\mathbb N}$
we have
  \begin{equation}\label{l2-bounds}
   \sup_{t\geq 0} \dbE[M(u(s))^{k}]
      \leq 
      C_k(\dbE[|M(u_0)|^k]+1)
  \end{equation}
and 
  \begin{equation}\label{h1-bounds}
   \sup_{t\geq 0} \dbE[H(u(s))^{k}]\leq C_k(\dbE[|H(u_0)|^k]+1)
  \end{equation}
where $C_k\ge0$ is a constant.
\end{Lemma}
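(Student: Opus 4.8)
The plan is to run It\^o's formula on the real powers $M(u_s)^{k}$ and $H(u_s)^{k}$ and to exploit the damping terms $-2\lambda k\,M(u_s)^{k}$ and (after reorganizing) $-2\lambda k\,H(u_s)^{k}$ produced by \eqref{evolution-M} and \eqref{evolution-H}; the exponential factor $e^{-2\lambda kt}$ these generate is exactly what upgrades a finite-time bound into a bound uniform in $t\ge 0$. I would prove both \eqref{l2-bounds} and \eqref{h1-bounds} by induction on $k$, working after a localization by the stopping times $\tau_N=\inf\{t:\Vert u_t\Vert_{H^1}\ge N\}$, so that all stochastic integrals below have zero expectation and all quantities are finite. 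Since the resulting bounds are uniform in $N$, Fatou's lemma lets me send $N\to\infty$ at the end.

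For \eqref{l2-bounds}, apply It\^o to $M(u_s)^{k}$ using \eqref{evolution-M}; the drift equals
$$-2\lambda k\,M(u_s)^{k}+k\Vert\Phi\Vert_{HS(L^2;L^2)}^{2}M(u_s)^{k-1}+2k(k-1)M(u_s)^{k-2}\sum_i \re(u_s,\Phi e_i)^2.$$
By Cauchy--Schwarz, $\sum_i \re(u_s,\Phi e_i)^2\le \Vert\Phi\Vert_{HS(L^2;L^2)}^{2}\,M(u_s)$, so the last two terms are dominated by $C_k\,M(u_s)^{k-1}$, and taking expectations yields $\frac{d}{ds}\dbE[M(u_s)^{k}]\le -2\lambda k\,\dbE[M(u_s)^{k}]+C_k\,\dbE[M(u_s)^{k-1}]$. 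The base case $k=1$ is immediate from \eqref{evolution-M}. In the inductive step $\dbE[M(u_s)^{k-1}]$ is already bounded uniformly in $s$, so Gr\"onwall's inequality against the damping gives $\sup_{s}\dbE[M(u_s)^{k}]\le \dbE[M(u_0)^{k}]+C_k\sup_s\dbE[M(u_s)^{k-1}]$, and a final use of Young's inequality (to absorb $\dbE[M(u_0)^{k-1}]$ into $\dbE[M(u_0)^{k}]+1$) puts this in the asserted form.

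The estimate \eqref{h1-bounds} is the substantive part, and the first task is coercivity. Because $\sigma<2/(d-2)$ for $d\ge3$ (and no constraint for $d\le2$) forces $d\sigma<2$, the Gagliardo--Nirenberg inequality $\Vert v\Vert_{L^{2\sigma+2}}^{2\sigma+2}\le C\Vert\nabla v\Vert_{L^2}^{d\sigma}\Vert v\Vert_{L^2}^{2\sigma+2-d\sigma}$ together with Young's inequality (small coefficient on $\Vert\nabla v\Vert_{L^2}^2$) gives
$$\tfrac14\Vert\nabla v\Vert_{L^2}^2-C\bigl(1+M(v)^{p}\bigr)\le H(v)\le \tfrac12\Vert\nabla v\Vert_{L^2}^2,\qquad p=\tfrac{2\sigma+2-d\sigma}{2-d\sigma}.$$
This simultaneously controls $\Vert\nabla v\Vert_{L^2}^2$ by $H(v)+C(1+M(v)^{p})$ and bounds $H$ below by $-C(1+M(v)^{p})$, so the meaningful nonnegative quantity to estimate is $\tilde H(v):=H(v)+C(1+M(v)^{p})$, whose $M$-contribution is already controlled by \eqref{l2-bounds}.

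I would then apply It\^o to $\tilde H(u_s)^{k}$, reading the drift and martingale bracket off \eqref{evolution-H}. The leading term again produces the damping $-2\lambda k\,H(u_s)^{k}$; the term $\frac{\lambda\sigma}{\sigma+1}\int|u|^{2\sigma+2}$ is reabsorbed using $\int|u|^{2\sigma+2}=(2\sigma+2)\bigl(\tfrac12\Vert\nabla u\Vert_{L^2}^2-H(u)\bigr)$ and the coercivity bound, leaving something controlled by $H(u_s)$ plus powers of $M(u_s)$. The remaining It\^o-correction and bracket terms involve $\Vert\nabla\Phi\Vert_{HS(L^2;L^2)}$, $\Vert|u|^\sigma\Phi\Vert_{HS}$, and $\sum_i\re(\Delta u+|u|^{2\sigma}u,\Phi e_i)^2$; integrating by parts to move $\Delta$ onto $\Phi e_i$ and estimating $\int|u|^{2\sigma}|\Phi e_i|^2$ by H\"older and Sobolev embedding, all of these are bounded, using $\Phi\in HS(L^2;H^1)$, by $C\Vert\nabla u_s\Vert_{L^2}^2$ plus powers of $M(u_s)$, hence by $C\,H(u_s)+C(1+M(u_s)^{q})$ via coercivity. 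This gives $\frac{d}{ds}\dbE[\tilde H(u_s)^{k}]\le -2\lambda k\,\dbE[\tilde H(u_s)^{k}]+C_k\dbE[\tilde H(u_s)^{k-1}]+C_k\bigl(1+\sup_s\dbE[M(u_s)^{q}]\bigr)$, and the same damped-Gr\"onwall-plus-induction argument as for $M$ closes the estimate. The main obstacle is exactly this last point: ensuring that the nonlinear noise--solution interaction terms in \eqref{evolution-H} are genuinely dominated by $\Vert\nabla u\Vert_{L^2}^2$ and powers of $M$ throughout the subcritical range, so that the coercivity absorbs them and the $-2\lambda k$ damping survives to give the uniform-in-time bound.
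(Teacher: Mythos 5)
Your proof of \eqref{l2-bounds} is, modulo the localization device, the paper's own proof: induction on $k$, It\^o's formula applied to powers of $M$, the Cauchy--Schwarz bound $\sum_i\re(u,\Phi e_i)^2\le\Vert\Phi\Vert_{HS(L^2;L^2)}^2M(u)$, and a damped Gr\"onwall/ODE argument (the paper justifies that the stochastic integrals are true martingales by citing \cite{EKZ} rather than by stopping times). That half is correct.

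The $H$-half has a genuine gap, and it sits exactly at the step you single out as ``the substantive part.'' Your coercivity sandwich rests on the claim that Assumption~\ref{assumption-existence-schro} forces $d\sigma<2$. This is false: $\sigma<2/(d-2)$ is the $H^1$-subcritical condition, and $2/(d-2)>2/d$ for every $d\ge3$ (for instance $d=3$, $\sigma=1$ is allowed and gives $d\sigma=3$), while for $d=1,2$ there is no restriction on $\sigma$ at all. The Gagliardo--Nirenberg/Young absorption of $\Vert\nabla v\Vert_{L^2}^{d\sigma}$ into $\tfrac14\Vert\nabla v\Vert_{L^2}^2$ works only in the $L^2$-subcritical range $d\sigma<2$; in the complementary regime $2/d\le\sigma<2/(d-2)$ a focusing-type energy $\tfrac12\Vert\nabla v\Vert_{L^2}^2-\tfrac{1}{2\sigma+2}\int|v|^{2\sigma+2}$ simply does not dominate $c\Vert\nabla v\Vert_{L^2}^2-C(1+M(v)^p)$ --- this is precisely the regime of focusing blow-up, where mass and energy together fail to control the $H^1$ norm. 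Consequently your $\tilde H$ need not be bounded below, the reabsorption of $\int|u|^{2\sigma+2}$ and the estimate of the bracket terms by $CH+C(1+M^q)$ both fail, and your induction for \eqref{h1-bounds} does not close on the full range of $\sigma$ permitted by the lemma.

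What makes the lemma work on that full range --- and what the paper's one-line ``repeating the same argument for $H$'' implicitly uses --- is the sign structure of the energy, not interpolation. The equation as written, $u_t=-i\Delta u+i|u|^{2\sigma}u+\ldots$, is (the conjugate of) the \emph{defocusing} NLS: its conserved energy is $\tfrac12\int|\nabla u|^2+\tfrac{1}{2\sigma+2}\int|u|^{2\sigma+2}$, with both terms nonnegative (the minus sign in the paper's displayed definition of $H$ is inconsistent with the equation, and the restriction $\sigma<2/(d-2)$ is exactly the defocusing hypothesis of \cite{DD}). With consistent signs no Gagliardo--Nirenberg is needed: $H\ge0$, $\Vert\nabla u\Vert_{L^2}^2\le 2H$ and $\int|u|^{2\sigma+2}\le(2\sigma+2)H$ hold pointwise, the nonlinear drift term in \eqref{evolution-H} acquires a favorable sign, and the remaining It\^o-correction and bracket terms are bounded, via H\"older, integration by parts, and the Sobolev embedding $H^1\subseteq L^{2\sigma+2}$, by constants times powers $H^a$ with $a<1$ (drift) and $a<2$ (bracket), which Young's inequality places strictly below the damping $-2\lambda kH^k$; the induction then closes exactly as for $M$. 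So either you must add the hypothesis $\sigma<2/d$, which proves strictly less than the lemma, or you should replace the coercivity step by this sign argument.
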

\colb

\begin{proof}[Proof of Lemma~\ref{bounds}]
Using similar ideas as in \cite{EKZ}, one can show that the local martingale appearing in \eqref{evolution-M} is a martingale. Thus we have
  \begin{equation}\dbE[M(u(t))]+2\lambda \int_0^t \dbE[M(u(s))]ds =\dbE[M(u_0)]+ t \Vert\Phi\Vert^2_{{HS(L^2;L^)}}\period
  \end{equation}
Solving this ODE for $\dbE[M(u(t))]$, we get
  \begin{equation}\dbE[M(u(t))]=e^{-2\lambda t}\dbE[M(u_0)]+\Vert\Phi\Vert^2_{{HS(L^2; L^2)}}\int_0^t e^{-2\lambda (t-s)}ds\leq \dbE[M(u_0)]+\frac{1}{2\lambda} \Vert\Phi\Vert^2_{{HS(L^2; L^2)}}
  \end{equation}
which proves \eqref{l2-bounds} for $k=1$.
For general $k$ we proceed by induction. We assume the existence of $C_k$ for a given $k\geq 1$ and apply Ito's lemma to $M(u(t))^{k+1}$ to obtain 
\begin{align}
   &dM^{k+1}(u(t))+2(k+1)\lambda M^{k+1}(u(t))dt 
   \nonumber\\&\indeq
   = (k+1) M^{k}(u(t))\Vert\Phi \Vert_{{HS(L^2; L^2)}}^2 dt
   \nonumber\\&\indeq\indeq
     + \frac{k(k+1)}{2} M^{k-1}(u(t)) \sum_i Re(u(t),\Phi e_i)^2 dt+\tilde M
\end{align}
where similarly $\tilde M$ can be shown to be a martingale. 
Thus the function $\dbE\left[M^{k+1}(u(t))\right]$ satisfies the ODE
\begin{align}
    &\left(\dbE\left[M^{k+1}(u(t))\right]\right)'+2(k+1)\lambda\dbE\left[M^{k+1}(u(t))\right]
 \nonumber\\&\indeq
   = (k+1)\dbE\left[ M^{k}(u(t))\Vert\Phi \Vert_{{HS(L^2; L^2)}}^2\right]+ \frac{k(k+1)}{2} \dbE\left[M^{k-1}(u(t)) \sum_i Re(u(t),\Phi e_i)^2\right]=:g_k(t),
\end{align}
where $g_k$ is a bounded function of $t$ by the induction assumption. By solving this ODE, we see that  the function $\dbE\left[M^{k+1}(u(t))\right]$ is bounded.
Repeating the same argument for $H^k(u(t))$ we obtain \eqref{h1-bounds}.
\end{proof}

\nnewpage
In order to obtain the tightness of the averaged measures, we use 
Lemma~\ref{cv-loc-hi}
and
Theorem~\ref{thm-limit-conservation}.
The last ingredient we need is the following lemma. 

\cole
\begin{Lemma}\label{schr-aldous}
Under the assumptions of Lemma~\ref{lemma-tightness}, 
for all stopping times $T_n$ and  real numbers $\d_n$ 
such that $\d_n\to0$
as $n\to\infty$, we have
  \begin{equation}\label{lemma-aldous-criterion}
   \dbE\left[\Vert u^n_{T_n+\d_n}-u^n_{T_n}\Vert^2_{L^2}\right]\to 0\mbox{~~as~} n\to \infty\period
  \end{equation} 
\end{Lemma}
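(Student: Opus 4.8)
The plan is to pass to the mild (Duhamel) formulation of \eqref{equation-schro} so that the unbounded operator $i\Delta$ is absorbed into the free Schr\"odinger group, whose unitarity is exactly what makes an $L^2$ estimate possible; a direct estimate of $\int_{T_n}^{T_n+\d_n} i\Delta u\,ds$ is hopeless since it would require $H^2$ control. Write $S(t)=e^{-\lambda t}e^{-it\Delta}$ for the contraction semigroup generated by $-(\lambda+i\Delta)$ on $L^2(\dbR^d)$; since $e^{-it\Delta}$ is unitary and $e^{-\lambda t}\le 1$, the operator $S(t)$ is a contraction on $L^2$ and $\|S(t)\Phi\|_{HS(L^2;L^2)}=e^{-\lambda t}\|\Phi\|_{HS(L^2;L^2)}$. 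Treating $u^n_{T_n}$ as initial datum and using the flow property together with optional stopping (so that $r\mapsto W_{T_n+r}-W_{T_n}$ is a Brownian motion), for deterministic $\d_n$ one has
\[
u^n_{T_n+\d_n}-u^n_{T_n}
=(S(\d_n)-I)u^n_{T_n}
+\int_0^{\d_n}S(\d_n-r)\,i|u^n_{T_n+r}|^{2\sigma}u^n_{T_n+r}\,dr
+\int_0^{\d_n}S(\d_n-r)\,\Phi\,d\bigl(W_{T_n+r}-W_{T_n}\bigr)
=:I_1^n+I_2^n+I_3^n .
\]
I would then show $\dbE[\|I_j^n\|_{L^2}^2]\to0$ for $j=1,2,3$ separately.

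For $I_1^n$ the key is the quantitative strong continuity of the group: splitting $S(\d)-I=e^{-\lambda\d}(e^{-i\d\Delta}-I)+(e^{-\lambda\d}-1)I$ and estimating on the Fourier side via $|e^{i\d|\xi|^2}-1|\le \sqrt2\,\d^{1/2}|\xi|$ gives $\|(e^{-i\d\Delta}-I)v\|_{L^2}\le\sqrt2\,\d^{1/2}\|\nabla v\|_{L^2}$, while $|e^{-\lambda\d}-1|\le\lambda\d$. Hence $\|I_1^n\|_{L^2}\lesssim(\d_n^{1/2}+\d_n)\|u^n_{T_n}\|_{H^1}$, so after squaring and taking expectations $\dbE[\|I_1^n\|_{L^2}^2]\lesssim\d_n\,\sup_n\dbE[\|u^n_{T_n}\|_{H^1}^2]$. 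For $I_3^n$, the It\^o isometry and the identity for $\|S(t)\Phi\|_{HS(L^2;L^2)}$ give $\dbE[\|I_3^n\|_{L^2}^2]=\dbE\bigl[\int_0^{\d_n}\|S(\d_n-r)\Phi\|_{HS(L^2;L^2)}^2\,dr\bigr]\le\d_n\|\Phi\|_{HS(L^2;L^2)}^2$, which vanishes since $\Phi\in HS(L^2;H^1)\subset HS(L^2;L^2)$ by Assumptions~\ref{assumption-existence-schro}. The uniform moment bound $\sup_n\dbE[\|u^n_{T_n}\|_{H^1}^2]<\infty$ needed above follows from Lemma~\ref{bounds}: the martingale structure behind \eqref{evolution-M}--\eqref{evolution-H} and optional stopping yield bounds on $\dbE[M(u_{T_n})^k]$ and $\dbE[H(u_{T_n})^k]$ uniform in $T_n$, and in the subcritical range of Assumptions~\ref{assumption-existence-schro} Gagliardo--Nirenberg controls $\|u\|_{H^1}^2$ by $M(u)$ and $H(u)$.

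The term $I_2^n$ is the genuine obstacle. Unitarity alone gives $\|I_2^n\|_{L^2}\le\int_0^{\d_n}\||u^n_{T_n+r}|^{2\sigma}u^n_{T_n+r}\|_{L^2}\,dr$, but $\||u|^{2\sigma}u\|_{L^2}=\|u\|_{L^{4\sigma+2}}^{2\sigma+1}$ is controlled by $\|u\|_{H^1}$ only when $4\sigma+2\le 2d/(d-2)$, i.e.\ $\sigma\le 1/(d-2)$. In this range (and for $d=1,2$, where $H^1$ embeds in every $L^p$) Gagliardo--Nirenberg closes the estimate, yielding $\|I_2^n\|_{L^2}\lesssim\d_n\sup_{r\in[0,\d_n]}\|u^n_{T_n+r}\|_{H^1}^{2\sigma+1}$. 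For the remaining, $L^2$-supercritical range $1/(d-2)<\sigma<2/(d-2)$ I would instead invoke the inhomogeneous Strichartz estimate for $e^{-it\Delta}$ (already available from the solution theory of \cite{DD}): choosing the admissible pair $(q,2\sigma+2)$ --- admissibility being precisely the condition $\sigma\le 2/(d-2)$, with $q>2$ in the strict range --- and using $u^n\in L^\infty_tH^1_x\hookrightarrow L^\infty_tL^{2\sigma+2}_x$ so that the nonlinearity lies in $L^\infty_tL^{(2\sigma+2)/(2\sigma+1)}_x$, the Strichartz bound over $[0,\d_n]$ together with H\"older in time gives $\dbE[\|I_2^n\|_{L^2}^2]\lesssim\d_n^{2/q'}\,\sup_{t}\dbE[\|u^n_t\|_{H^1}^{2(2\sigma+1)}]$, where $1/q'=1-1/q>0$ and the damping factor $e^{-\lambda(\d_n-r)}\le1$ is harmless.

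Collecting the three bounds and using the uniform-in-time, uniform-in-$n$ moment estimates of Lemma~\ref{bounds} (again at the stopping times $T_n+r$ via optional stopping) shows each of $\dbE[\|I_j^n\|_{L^2}^2]$ is $O(\d_n^{\theta})$ for some $\theta>0$, whence \eqref{lemma-aldous-criterion} follows. The two delicate points are thus the handling of $i\Delta$ through the unitary group --- which forces the Duhamel representation rather than a naive pathwise estimate --- and, in high dimensions, the Strichartz bound for the nonlinear Duhamel term; every other ingredient reduces to the conserved-quantity moment estimates already in hand.
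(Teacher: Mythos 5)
Your proposal follows the same overall route as the paper: the identical mild-formulation decomposition of $u^n_{T_n+\d_n}-u^n_{T_n}$ into the three terms $(S^\lambda(\d_n)-I)u^n_{T_n}$, the nonlinear Duhamel integral, and the stochastic convolution, with the first term handled by strong continuity of the group together with uniform $H^1$ moments (your Fourier-side bound $\Vert(e^{-i\d\Delta}-I)v\Vert_{L^2}\lesssim \d^{1/2}\Vert\nabla v\Vert_{L^2}$ makes quantitative what the paper calls ``obvious by PDE arguments'') and the third by the It\^o isometry, exactly as in the paper. The genuine difference is the nonlinear term. The paper estimates it by Cauchy--Schwarz in time and asserts that the integrand $\dbE[\Vert S^\lambda(\d_n-s)(|u|^{2\sigma}u)\Vert_{L^2}^2]$ is uniformly bounded ``given the uniform bounds \eqref{l2-bounds}''; but $\Vert |u|^{2\sigma}u\Vert_{L^2}=\Vert u\Vert_{L^{4\sigma+2}}^{2\sigma+1}$ is controlled by the $H^1$ norm (via Sobolev or Gagliardo--Nirenberg) only when $4\sigma+2\le 2d/(d-2)$, i.e.\ $\sigma\le 1/(d-2)$, whereas Assumptions~\ref{assumption-existence-schro} allow $\sigma<2/(d-2)$; moreover the cited bound \eqref{l2-bounds} (mass moments) cannot suffice by itself, one needs at least \eqref{h1-bounds}. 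You correctly flag this and supply the missing ingredient for the range $1/(d-2)<\sigma<2/(d-2)$: the inhomogeneous Strichartz estimate for the admissible pair $(q,2\sigma+2)$, whose admissibility condition is precisely $\sigma\le 2/(d-2)$, yielding a factor $\d_n^{2/q'}$ and closing the estimate with the moment bounds of Lemma~\ref{bounds}. So your argument is correct and in fact covers the full range of the standing assumptions, where the paper's one-line justification of the nonlinear term has a gap. The remaining details, common to both write-ups, are the transfer of the moment bounds of Lemma~\ref{bounds} from deterministic times to the stopping times $T_n+r$ (your optional-stopping remark addresses this) and the conversion of moments of $M$ and $H$ into moments of the $H^1$ norm.
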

\colb

\begin{proof}[Proof of Lemma~\ref{schr-aldous}]
We denote by $S^\lambda$ the semigroup associated with the linear part of the equation. With this notation, we have
  \begin{align}
   u^n_{T_n+\d_n}-u^n_{T_n}
   &=
    S^\lambda(\d_n) u^n_{T_n}- u_{T_n}
     + i\int_{0}^{\d_n} S^\lambda(\d_n-s)(|u^n(T_n+s)|^{2\sigma}u^n(s)) ds 
       \nonumber\\&\indeq
   +\int_{0}^{\d_n} S^\lambda(\d_n-s)\Phi dW_{T_n+s}
   \period
  \end{align}
The lemma would follow from the following three convergence statements:
  \begin{align}
   &\dbE\left[\Vert S^\lambda(\d_n) u^n_{T_n}- u^n_{T_n}\Vert^2_{L^2}\right]\to 0,
   \label{EQ09}
  \\
   &\dbE\left[\Vert  \int_{0}^{\d_n} S^\lambda(\d_n-s)(|u^n(T_n+s)|^{2\sigma}u^n(s)) ds \Vert^2_{L^2}\right]\to 0,
   \label{EQ10} 
  \\
   &\dbE\left[\Vert \int_{0}^{\d_n} S^\lambda(\d_n-s)\Phi dW_{T_n+s}\Vert^2_{L^2}\right]\to 0
   \label{EQ11}
   \period
  \end{align}
By PDE arguments, the first convergence is obvious.
For the second convergence \eqref{EQ10}, we simply write 
  \begin{align}
    &\dbE\left[\left\Vert  \int_{0}^{\d_n} S^\lambda(\d_n-s)(|u(T_n+s)|^{2\sigma}u(s)) ds\right \Vert^2_{L^2}\right]
   \nonumber\\&\indeq
    \leq \d_n \int_{0}^{\d_n} \dbE\left[\Vert  S^\lambda(\d_n-s)(|u(T_n+s)|^{2\sigma}u(s))  \Vert^2_{L^2}\right]ds
   \period
  \end{align}
Given the uniform bounds \eqref{l2-bounds}, the integrand is uniformly bounded and
the convergence thus holds. 
For the third convergence \eqref{EQ11}, we use the the Burkholder-Davis-Gundy Inequality 
\cite[Lemma~5.24]{DZ} and obtain 
  \begin{equation}
   \dbE\left[\left\Vert \int_{0}^{\d_n} S^\lambda(\d_n-s)\Phi dW_{T_n+s}\right\Vert^2_{L^2}\right]\leq \int_0^{\d_n }\Vert S^\lambda(\d_n-s)\Phi \Vert_{HS(L^2,L^2)}^2 ds \to 0
  \end{equation}
as $n\to\infty$.
\end{proof}

\subsection{Proof of Lemma~\ref{lemma-tightness}}
{\it Proof of (i):}
We show that the assumptions of Theorem~\ref{thm-limit-conservation} with $k=1$ are satisfied for the equation~\eqref{equation-schro} and the set 
$    \{(\nu^nP_{t_n})(\cdot):n\in\dbN\}$ 
is relatively weakly compact over $H^1(\dbR^d)$.
We define 
\begin{align}
&F_0 =\tilde F_0=0
\nonumber\\&
G_0(t,r,v):=e^{-2\lambda (t-r)}\Vert \Phi\Vert_{HS(L^2,L^2)}^2,\\
&F_1(v)=\frac{1}{2\sigma +2} \int |v(x)|^{2\sigma+2} dx\\
&\tilde F_1(r,v):=e^{-2\lambda r}\frac{1}{2\sigma +2} \int |v(x)|^{2\sigma+2} dx,\\
&G_1(r,v):= e^{-2\lambda r}\biggl(\int_{\dbR^d} |v(x)|^{2\sigma+2} dx +\Vert \nabla \Phi\Vert^2_{HS(L^2,L^2)}
\nonumber\\&\indeq\indeq\indeq\indeq\indeq\indeq
- \Vert |v|^\sigma \Phi\Vert^2_{HS(L^2,L^2)}-\sigma \sum_i Re(|v|^{2\sigma -2 }v^2, (\Phi e_i)^2) \biggr)
   \period
\end{align}
We apply the Gagliardo-Nirenberg interpolation inequality to obtain 
\begin{equation}\Vert v\Vert_{L^{2\sigma+2}}^{2\sigma+2}\leq C\Vert v\Vert_{H^1}^{d\sigma}\Vert v\Vert^{\sigma(2-d)+2}_{L^2} \end{equation}
which shows that $F_1(\cdot)$, $\tilde F_1(r,\cdot)$, and $G_1(r,\cdot)$ are continuous in $L^2(\dbR^d)$ on bounded sets of $H^1(\dbR^d)$. They also have at most polynomial growth in $H^1(\dbR^d)$ and given the bounds on $u^n_0$ and Lemma~\eqref{bounds}, with $b(u)=|u|^{2\sigma}u$, we have the bound \eqref{unif-bound-theo}. Additionally, given the assumption \ref{assumption-existence-schro} on $\sigma$, we can easily verify that the degree of $b$ satisfy for $d\geq 2$, 
$$2\sigma +1 <\frac{d+2}{d-2}\leq \frac{2d}{d-2}.$$

Since $\nu$ is a $H^1$-square integrable martingale solution of \eqref{equation-schro}, by \cite[Theorem~2.4]{D}, we can extend the probability space $(\cZ,\cD,\nu)$ to obtain a family of Brownian motions $\hat B^i$ such that the $H^{-1}(\dbR^d)$-valued continuous martingale $ M_t= z(t)-z(0) +\int_0^t (\lambda z(s)-i\Delta z(s) -i|z(s)|^{2\sigma} z(s) )ds$ can be represented as 
\begin{equation}dM_t=\sum_i \Phi e_i d\hat B^i_t\period\end{equation}
Similarly to \cite[Propositions~3.2 and~3.3]{DD}, we apply Ito's lemma to $M(z(t))$ and $H(z(t))$ on this probability space to obtain that \eqref{conserved-quantities} holds for $i=0,1$ under $\nu$. This shows that the equation \eqref{equation-schro} has the $H^1$-norm evolution property.

Next, we consider the sequence of measures ${(\nu^nP_{t_n})(dv)}$ as measures on the space $L^2_{\rm loc}(\dbR^d)$. Denote by $B_k\subseteq \dbR^d$ the ball with radius $k$, centered at the origin. Given the uniform estimates \eqref{h1-bounds}, by the compact embedding of the space $H^1(B_k)$ in $L^2(B_k)$ and a successive application of Prokhorov's theorem, we obtain that there exists a subsequence of $\{t_n,u^n_0\}$, which we still denote $\{t_n,u^n_0\}$, and a distribution $\mu$ on $H^1(\dbR^d)$ such that 
\begin{equation}{(\nu^nP_{t_n})(dv)}\to\mu\mbox{~~in distribution in~}L^2_{\rm loc}(\dbR^d)\period\end{equation} 
We also note that the solutions $u^n$ satisfy the assumptions (iv) and (v) as consequences of \eqref{h1-bounds} and \eqref{lemma-aldous-criterion} respectively. 
Thus by 
Lemma~\ref{cv-loc-hi} 
and 
Theorem~\ref{thm-limit-conservation},
the convergence 
\begin{equation}{(\nu^nP_{t_n})(dv)}\to\mu\end{equation}
is in fact in distribution in $H^1(\dbR^d)$ which is what we claimed.

{\it Proof of (ii):}
We choose $(s_n,v_n)\in [0,T]\times K$. By the compactness of the two sets there exist a subsequence of $(s_n,v_n)$, still denoted $(s_n,v_n)$, and $(s,v)\in[0,1]\times K$ such that $(s_n,v_n)\to (s,v)$. We claim that $P_{s_n}(v_n,\cdot)$ converges in distribution in $H^1(\dbR^d)$ to $P_s(v,\cdot)$. 

We denote by $u^n$ and $u$ the solutions of \eqref{equation-schro} with initial data $v^n$ and $v$ respectively. In order to show this convergence we prove that we have
\begin{align}\label{conv-ii}\sup_{t\in[0,1]} \bigl(\Vert u^n_t -u_t\Vert_{H^1}+\Vert u_{s_n}-u_s\Vert_{H^1}\bigr)\to 0,\,\dbP\mbox{-a.s}.\end{align}
The convergence $\Vert u_{s_n}-u_s\Vert_{H^1}\to 0$ is a direct consequence of $u\in C([0,1],H^1),\,\dbP$-a.s. 
It is shown in \cite{DD} that 
  \begin{equation}
    \int_0^\cdot S^\lambda(\cdot-r)\Phi dW_r\in C([0,1];H^1(\dbR^d))\cap L^{\fractext{4(\sigma+1)}{\sigma d}}(0,1,W^{1,2\sigma +2}(\dbR^d))   
    \qquad
    \dbP\mbox{-a.s.}
   \label{EQ04}
  \end{equation}
Thus, applying  \cite[Proposition~3.5]{DD}, 
we also have $\dbP$-a.s. $|u^n-u|_{C([0,1];H^1(\dbR^d))}\to0$ as $n\to\infty$. 

We now show that \eqref{conv-ii} implies the convergence  $P_{s_n}(v^n,\cdot) \to P_s(v,\cdot)$. 
We pick $\xi\colon H^1(\dbR^d)\to\dbR$ uniformly continuous and bounded. Then 
\begin{align}
|P_s\xi (v)-P_{s_n}\xi(v^n)|&\leq \dbE[|\xi(u_s)-\xi(u^n_{s_n})|]
\nonumber
\\
&\leq  \dbE[|\xi(u_s)-\xi(u_{s_n})|]+ \dbE[|\xi(u_{s_n})-\xi(u^n_{s_n})|]
   \period
\end{align}
Note that \eqref{conv-ii} and the uniform continuity of $\xi$ imply that $\dbP$-a.s. $|\xi(u_s)-\xi(u_{s_n})|+|\xi(u_{s_n})-\xi(u^n_{s_n})|\to 0$ as $n\to\infty$. By the dominated convergence theorem, we obtain $|P_s\xi (v)-P_{s_n}\xi(v^n)|\to 0$ as $n\to \infty$.
\qed

\section{Compactness of the set of invariant measures}
In this section, we establish the existence
of an ergodic measure.

\cole
\begin{Theorem}
\label{T.compactness}
Under Assumptions~\ref{assumption-existence-schro}, 
the set of $H^1(\dbR^d)$-valued invariant measures is a convex and compact subset of 
the space of probability measures on $H^1(\dbR^d)$. 
\end{Theorem}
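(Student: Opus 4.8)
The plan is to treat the three features separately---convexity, tightness, and closedness under weak limits---and then to conclude, using that the space $\mathcal{P}(H^1(\dbR^d))$ of probability measures on the Polish space $H^1(\dbR^d)$ is metrizable in the weak topology, so that compactness follows from sequential compactness. Convexity is immediate: the defining identity $\int \xi\,d\mu=\int P_t\xi\,d\mu$, required for all $\xi\in C_b(H^1(\dbR^d);\dbR)$ and $t\ge0$, is linear in $\mu$, so any convex combination of invariant measures is again invariant.

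The heart of the matter is tightness of the set $\mathcal{I}$ of invariant measures, and for this I would first show that every invariant measure enjoys moment bounds that are \emph{uniform} over $\mathcal{I}$. The mechanism is that stationarity converts the differential identities behind Lemma~\ref{bounds} into algebraic ones. Indeed, if $u_0\sim\mu$ with $\mu\in\mathcal{I}$, then $u_t\sim\mu$ for every $t$, so each map $t\mapsto\dbE[M(u_t)^k]$ and $t\mapsto\dbE[H(u_t)^k]$ is constant. Taking expectations in \eqref{evolution-M} and setting the time derivative to zero gives $\dbE^\mu[M]=\Vert\Phi\Vert^2_{HS(L^2;L^2)}/(2\lambda)$, a bound independent of $\mu$; feeding this into the ODE recursion from the proof of Lemma~\ref{bounds} (again with vanishing left-hand derivative) bounds $\dbE^\mu[M^k]$ by a constant depending only on $k,\lambda,\Phi$, and the analogous computation with \eqref{evolution-H} controls $\dbE^\mu[|H|^k]$. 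Combining these with the Gagliardo--Nirenberg inequality $\Vert v\Vert^{2\sigma+2}_{L^{2\sigma+2}}\le C\Vert v\Vert_{H^1}^{d\sigma}\Vert v\Vert_{L^2}^{\sigma(2-d)+2}$ and the identity $\Vert v\Vert_{H^1}^2=M(v)+2H(v)+\frac{1}{\sigma+1}\Vert v\Vert^{2\sigma+2}_{L^{2\sigma+2}}$ then yields a uniform bound $\dbE^\mu[\Vert v\Vert_{H^1}^{4\vee\lceil4d\sigma\rceil}+\Vert v\Vert_{L^2}^{\lceil4\sigma(2-d)+8\rceil}]\le R$, valid for every $\mu\in\mathcal{I}$ with $R$ independent of $\mu$---precisely the hypothesis of Lemma~\ref{lemma-tightness}(i).

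With this uniform bound in hand, tightness and closedness follow together. Given any sequence $\{\mu_n\}\subseteq\mathcal{I}$, I would apply Lemma~\ref{lemma-tightness}(i) with $\nu^n=\mu_n$ and $t_n=n\to\infty$; since invariance gives $\nu^n P_{t_n}=\mu_n P_n=\mu_n$, the lemma asserts that $\{\mu_n\}$ is tight, whence by Prokhorov's theorem a subsequence $\mu_{n_j}$ converges weakly to some $\mu\in\mathcal{P}(H^1(\dbR^d))$. To see $\mu\in\mathcal{I}$, fix $\xi\in C_b(H^1(\dbR^d);\dbR)$ and $t\ge0$; the Feller property (\cite[Proposition~3.5]{DD}) ensures $P_t\xi\in C_b(H^1(\dbR^d);\dbR)$, so passing to the limit in $\int\xi\,d\mu_{n_j}=\int P_t\xi\,d\mu_{n_j}$ gives $\int\xi\,d\mu=\int P_t\xi\,d\mu$, i.e.\ $\mu$ is invariant. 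Thus every sequence in $\mathcal{I}$ has a subsequence converging to a point of $\mathcal{I}$, which shows simultaneously that $\mathcal{I}$ is closed and (sequentially, hence fully) compact. Together with convexity this proves the theorem.

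The step I expect to be the main obstacle is the uniform moment bound: one must verify that the self-referential energy estimates actually close---that the stationary algebraic recursion produces finite constants and, through Gagliardo--Nirenberg, controls exactly the high powers of $\Vert\cdot\Vert_{H^1}$ and $\Vert\cdot\Vert_{L^2}$ demanded in Lemma~\ref{lemma-tightness}(i). Particular care is needed because of the focusing sign in $H$, which prevents a naive pointwise domination of $\Vert\nabla v\Vert_{L^2}$ and forces one to exploit the bounds on $M$ and $H$ jointly rather than term by term.
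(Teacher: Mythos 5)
Your overall route---convexity by linearity, uniform moment bounds for invariant measures, tightness via Lemma~\ref{lemma-tightness}(i) applied with $\nu^n=\mu_n$ and $t_n=n$ so that $\mu_nP_{t_n}=\mu_n$, and closedness under weak limits---is the same as the paper's (you are in fact more explicit than the paper about closedness, which the paper simply asserts; your Feller argument is the correct justification). The gap is in the moment-bound step, and it is not peripheral: it is the main technical content of the paper's proof. You write that stationarity makes $t\mapsto\dbE[M(u_t)^k]$ constant and that ``taking expectations in \eqref{evolution-M} and setting the time derivative to zero'' yields $\dbE^\mu[M]=\Vert\Phi\Vert^2_{HS(L^2;L^2)}/2\lambda$. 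This presupposes exactly what must be proved, namely that $\dbE^\mu[M^k]<\infty$. The theorem concerns \emph{all} invariant measures, not only those produced by Krylov--Bogolyubov from nice initial data, and an arbitrary invariant measure carries no a~priori integrability. If $\dbE^\mu[M]=+\infty$, the map $t\mapsto\dbE[M(u_t)]$ is still constant (identically $+\infty$), but no differential identity can be extracted from it. Worse, taking expectations in \eqref{evolution-M} requires the stochastic integral $\int_0^t\sum_i\re(u_s,\Phi e_i)\,dB^i_s$ to be a true martingale rather than a local one, and the natural sufficient condition for this, $\dbE\int_0^t M(u_s)\,ds<\infty$, is again the unknown finiteness. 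As stated, your argument is circular: it shows that \emph{if} the moments are finite, then they satisfy $\mu$-independent bounds, but not that they are finite.

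The paper closes this hole by localization. One sets $\tau_R=\inf\{s\geq0: M_s\geq R\}$ and works on the truncation event $\{M_0\leq R_0\}$; the stochastic integral stopped at $\tau_R$ then has zero expectation, which gives a bound on $\dbE[M_{t\wedge\tau_R}\1_{\{M_0\leq R_0\}}]$. Letting $R\to\infty$ (Fatou on the left, dominated convergence on the right), choosing $t_{R_0}$ large, invoking invariance to identify the law of $M_{t_{R_0}}$ with $\mu$, and finally letting $R_0\to\infty$ yields \eqref{EQ07}, i.e.\ $\int\Vert v\Vert_{L^2}^2\,\mu(dv)\leq\Vert\Phi\Vert^2_{HS(L^2;L^2)}/\lambda<\infty$; the same localization bootstraps to all powers of $M$ and to $H$, with constants that at this stage may still depend on $\mu$. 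Only \emph{after} this finiteness is secured does the paper run the stationary recursion $2\lambda\dbE[M_t^{k+1}]\leq(\Vert\Phi\Vert_{HS(L^2,L^2)}^2+k/2)\,\dbE[M_t^k]$ that you describe, to make the constants independent of $\mu$. Your closing paragraph correctly senses that the ``self-referential'' estimates are the delicate point (and your Gagliardo--Nirenberg bookkeeping for converting $M,H$ bounds into $H^1$ bounds is fine), but the missing idea is the stopping-time/truncation argument, not the recursion or the interpolation; with that step inserted, your proof coincides with the paper's.
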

\colb

\begin{proof}
Note that the convexity is trivial,
so we only need to show compactness.
Let $\mu$ be such a measure and $(u(t))$ the solution of \eqref{equation-schro} having distribution $\mu$ at all time. For simplicity of notation, we denote $M_s=M(u(s))$ and $H_s=H(u(s))$. Our first objective is to prove the integrability of these semi-martingales.

We fix $R_0,R>0$ and define $\tau_{R}:=\inf\{s\geq 0: M_s\geq R\}$. We apply \eqref{evolution-M} on the event ${\{M_0\leq R_0\}}$ and obtain 
\begin{align}
M_{t\wedge \tau_R} =&M_0 e^{-2\lambda {t\wedge \tau_R}} +\Vert\Phi \Vert_{HS(L^2; L^2)}^2 \int_0^{t\wedge \tau_R} e^{-2\lambda({t\wedge \tau_R}-s)}  ds \\
&+2 \int_0^{t\wedge \tau_R} e^{-2\lambda({t\wedge \tau_R}-s)}Re(u(s),\Phi e_i) dB^i_s
   \period
\end{align}
Note that by the localization the expectation of the stochastic integral vanishes. Therefore,
\begin{equation}\dbE\left[M_{t\wedge \tau_R}\1_{\{M_0\leq R_0\}}\right] =\dbE\left[M_0\1_{\{M_0\leq R_0\}} e^{-2\lambda {t\wedge \tau_R}} \right]+\Vert\Phi \Vert_{HS(L^2; L^2)}^2\dbE\left[ \int_0^{t\wedge \tau_R} e^{-2\lambda({t\wedge \tau_R}-s)} \1_{\{M_0\leq R_0\}} ds\right]\period\end{equation}
For fixed $R_0$, the integrands on the right hand side are uniformly bounded and the integrand on the left hand side is non-negative. We apply the dominated convergence theorem for the right side and Fatou's lemma for the left to obtain that
\begin{equation}\dbE\left[M_t\1_{\{M_0\leq R_0\}} \right]\leq \dbE\left[M_0\1_{\{M_0\leq R_0\}}   \right]e^{-2\lambda t}+\frac{\Vert\Phi \Vert_{HS(L^2; L^2)}^2}{2\lambda}\period  \end{equation}
Therefore, we can choose $t_{R_0}>0$ such that for all ${R_0}>0$, we have
\begin{equation}\dbE\left[M_{t_{R_0}}\1_{\{M_0\leq {R_0}\}} \right]\leq \frac{\Vert\Phi \Vert_{HS(L^2; L^2)}^2}{\lambda}\period\end{equation}
Noting also that the distribution of $M_{t_{R_0}}$ is $\mu$ we obtain there exists $f_{R_0}(v) \to 1$ $\mu$-a.s. as ${R_0}\to\infty$ and 
\begin{equation}\dbE\left[M_{t_{R_0}}\1_{\{M_0\leq {R_0}\}} \right]=\int  \Vert v\Vert_{L^2}^2f_{R_0}(v)\mu(dv)\leq \frac{\Vert\Phi \Vert_{HS(L^2; L^2)}^2}{\lambda}\period\end{equation}
Taking the limit ${R_0}\to \infty$, we obtain 
  \begin{equation}
   \int  \Vert v\Vert_{L^2}^2\mu(dv)\leq \frac{\Vert\Phi \Vert_{HS(L^2; L^2)}^2}{\lambda}
   \period
   \label{EQ07}
  \end{equation}
Similarly to the proof of Lemma~\ref{bounds}, we apply  Ito's lemma to $M^{k+1}(u(t))$, localize with stopping times and prove that there exists $C_k(\Phi,\lambda)$ which may a~priori depend on $\mu$ such that
\begin{equation}\int \Vert v\Vert_{L^2}^{2k}\mu (dv)\leq C_k(\Phi,\lambda)<\infty,
   \comma  k=1\ldots\period\end{equation}

We also apply the same procedure to $H_t$ to obtain that there exists $\tilde C_k (\Phi,\lambda)$ that may again depend on $\mu$ such that 
\begin{equation}\int \Vert v\Vert_{H^1}^{2k}\mu (dv)\leq \tilde C_k(\Phi,\lambda)<\infty\period\end{equation}
Given this integrability, we return to \eqref{evolution-M} and\eqref{evolution-H} to prove that $C_k(\Phi,\lambda)$ and $\tilde C_k (\Phi,\lambda)$ can be taken independent of $\mu$. Since $\mu$ is an invariant measure, we get $d \dbE[M_t]=d\dbE[H_t]=0$ and 
\begin{equation}\dbE[M_t]=  \frac{\Vert\Phi \Vert_{HS(L^2; L^2)}^2}{2\lambda}\period\end{equation}
Using the same invariance we obtain 
\begin{align}
2\lambda \dbE[M^{k+1}_t]&= \Vert \Phi\Vert_{HS(L^2,L^2)}^2  \dbE[M^{k}_t] +\frac{k}{2} \dbE[M^{k-1}_t\sum_i Re (u(t),\Phi e_i)^2]\\
&\leq \left(\Vert \Phi\Vert_{HS(L^2,L^2)}^2 + \frac{k}{2}\right) \dbE[M^{k}_t],
\end{align}
which shows by induction that $C_k(\Phi,\lambda)$ may be taken independent of $\mu$. 
Applying the same procedure to the equation \eqref{evolution-H}, we obtain that $\tilde C_k(\Phi,\lambda)$ can be taken independent of $\mu$. 

We now prove the sequential compactness of the set of $H^1(\dbR^d)$-valued invariant measures. Let $\mu^n$ be a sequence of such invariant measures of the equation \eqref{equation-schro}. 
Without loss of generality, we assume that the $\sigma$-algebra $\dbF_0$ is rich enough so that there exists a family of $\dbF_0$-measurable random variables $u^n_0$ with distribution $\mu^n$. 
The uniform bounds we have proven gives us
\begin{equation}\sup_n \int \Vert v\Vert_{L^2}^{2k}\mu^n (dv)\leq C_k(\Phi,\lambda)
\end{equation}
and
\begin{equation}
\sup_n \int  \Vert v\Vert_{H^1}^{2}\mu^n (dv)\leq \tilde C_k (\Phi,\lambda),\end{equation}
which a~fortiori imply
\begin{equation}\dbE\left[\Vert u^n_0\Vert _{H^1}^{4\vee\lceil{4d\sigma}\rceil}+\Vert u^n_0\Vert _{L^2}^{\lceil{4\sigma(2-d)+8}\rceil}\right]\leq R\period\end{equation}
Therefore, Lemma~\ref{lemma-tightness} and the fact that $\mu^n$ is an invariant measure show that the family 
\begin{equation}\bigl\{(\mu^n P_{t_n})(\cdot):n\in\dbN\bigr\}=\{\mu^n:n\in\dbN\}\end{equation} is tight. Noting that the set of invariant measures is closed, we obtain the required compactness. 
\end{proof}

\cole
\begin{Corollary}
\label{c.ergodic}
Under Assumptions~\ref{assumption-existence-schro}, there exists an ergodic invariant measure.
\end{Corollary}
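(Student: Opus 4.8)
The plan is to deduce the existence of an ergodic measure purely from the structural information already obtained: the set of invariant measures is nonempty by Theorem~\ref{T01}, and it is convex and compact by Theorem~\ref{T.compactness}. Write $\mathcal I$ for the set of $H^1(\dbR^d)$-valued invariant measures of $P_t$, viewed as a subset of the space of finite signed Borel measures on $H^1(\dbR^d)$ endowed with the topology of weak convergence (the dual pairing against $C_b(H^1(\dbR^d);\dbR)$). This ambient space is a locally convex Hausdorff topological vector space, so the setting is exactly the one required by the Krein--Milman theorem.

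First I would record that $\mathcal I\neq\emptyset$ by Theorem~\ref{T01} and that $\mathcal I$ is convex and weakly compact by Theorem~\ref{T.compactness}. The Krein--Milman theorem then guarantees that $\mathcal I$ has at least one extreme point $\mu_\star$. Next I would invoke the classical characterization that, for a Markov semigroup, the extreme points of the set of invariant measures coincide precisely with the ergodic invariant measures (see, e.g., \cite{DZ}). Applying this equivalence to $\mu_\star$ shows that $\mu_\star$ is ergodic, which is the assertion of the corollary.

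The only points needing care are that the topology of weak convergence of probability measures on the Polish space $H^1(\dbR^d)$ genuinely realizes $\mathcal I$ as a compact convex subset of a locally convex space, and that the extreme-point/ergodicity equivalence is valid in this infinite-dimensional context; both are standard facts once weak compactness of $\mathcal I$ is available. I therefore expect no substantive obstacle at this stage: the essential difficulty has already been absorbed into the compactness statement of Theorem~\ref{T.compactness}, and the present argument is the routine Krein--Milman extraction of an extreme, hence ergodic, point.
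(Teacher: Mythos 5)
Your proposal is correct and follows essentially the same route as the paper: the paper likewise applies the Krein--Milman theorem to the convex compact set of invariant measures from Theorem~\ref{T.compactness} and then invokes Proposition~3.2.7 of \cite{DZ} (the equivalence of extremality and ergodicity) to conclude. Your additional remarks on nonemptiness via Theorem~\ref{T01} and the locally convex setting are sound clarifications but do not change the argument.
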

\colb

\begin{proof}
By the Krein-Milman theorem, the compactness of the set of invariant measures implies that there exists at least one invariant measure that is an extremal point of this set. Proposition 3.2.7 of \cite{DZ} then implies that such a measure is ergodic. 
\end{proof}

\section*{Acknowledgments} 
I.K.~was supported in part by the NSF grant DMS-1311943,
while M.Z.~was supported in part by the NSF grant DMS-1109562.

\end{document}